\documentclass[a4paper,english,numberwithinsect,11pt]{lipicsplain-v2016}

\usepackage{amstext}

\usepackage{lineno,hyperref}
\usepackage{amssymb,amsfonts,amsmath}
\modulolinenumbers[5]
\usepackage{mathtools}
\usepackage{paralist}

\usepackage[usenames,dvipsnames,svgnames,table,x11names]{xcolor}
\usepackage{color,soul}
\soulregister\cite7
\soulregister\ref7
\soulregister\pageref7

\usepackage{pgf,tikz}
\usepackage{mathrsfs}
\usetikzlibrary{arrows,calc}
\definecolor{xdxdff}{rgb}{0.49019607843137253,0.49019607843137253,1.}
\definecolor{qqqqcc}{rgb}{0.,0.,0.8}

\sethlcolor{SeaGreen1}
\usepackage{todonotes}

\usepackage{xspace}

\usepackage[title,titletoc,toc]{appendix}
\usepackage[capitalize]{cleveref}

\crefname{appsec}{Appendix}{Appendices}

\theoremstyle{plain}
\newtheorem{thm}[theorem]{Theorem}
\newtheorem{cor}[theorem]{Corollary}
\newtheorem{prop}[theorem]{Proposition}
\newtheorem{lem}[theorem]{Lemma}
\theoremstyle{definition}
\newtheorem{rmk}[theorem]{Remark}
\newtheorem{defn}[theorem]{Definition}
\Crefname{thm}{Theorem}{Theorems}
\Crefname{lem}{Lemma}{Lemmas}

\newenvironment{pf}{\begin{proof}}{\end{proof}}
\bibliographystyle{plainurl}%

\usepackage{nicefrac}
\usepackage{xspace}

\newcommand{\erk}{\textnormal{er}(\KCC)}
\newcommand{\erkg}{\textnormal{er}(\KCC(\GCC))}
\newcommand{\opa}{\textnormal{OPT}_\textnormal{A}}
\newcommand{\opb}{\textnormal{OPT}_\textnormal{B}}
\newcommand{\opt}[1]{\textnormal{OPT}_{#1}}

\newcommand{\OPTA}{\opt\threeOMAS(\GCC)}
\newcommand{\OPTB}{\opt\MaxMM(\KG)}
\newcommand{\ALGA}{m_\threeOMAS (\GCC,\ACC(\GCC,\VSS))}
\newcommand{\ALGB}{m_\MaxMM (\KG,\VSS)}

\newcommand{\KG}{\KCC(\GCC)}
\newcommand{\KGH}{\KCC(\GCC,\HCC)}
\newcommand{\tildeKG}{\widetilde{\KCC}(\GCC)}
\newcommand{\edgecardinality}{\left|\ECC\right|}

\newcommand{\inDG}[1]{\deg^{-}_{\GCC}({#1})}
\newcommand{\outDG}[1]{\deg^{+}_{\GCC}({#1})}
\newcommand{\inDH}[1]{\deg^{-}_{\HCC}({#1})}
\newcommand{\outDH}[1]{\deg^{+}_{\HCC}({#1})}
\newcommand{\vertH}{\VCC_\HCC}
\newcommand{\edgeH}{\ECC_\HCC}

\DeclareMathOperator{\conv}{conv}

\newcommand{\dcomplex}[2]{{#1}^{(#2)}}
\newcommand{\twocomplex}[1]{\dcomplex{#1}{2}}

\newcommand{\ACC}{A}
\newcommand{\BCC}{B}
\newcommand{\CCC}{C}
\newcommand{\DCC}{\mathcal{D}}
\newcommand{\ECC}{E}
\newcommand{\FCC}{F}
\newcommand{\GCC}{G}
\newcommand{\HCC}{H}

\newcommand{\KCC}{\mathcal{K}}
\newcommand{\LCC}{\mathcal{L}}
\newcommand{\MCC}{\mathcal{M}}

\newcommand{\QCC}{\mathcal{Q}}

\newcommand{\SCC}{\mathcal{S}}

\newcommand{\VCC}{V}

\newcommand{\HKCC}{\HCC_\KCC}

\newcommand{\VSS}{\mathcal{V}}
\newcommand{\WSS}{\mathcal{W}}

\newcommand{\MaxMM}{\textsf{MaxMM}\xspace}
\newcommand{\MinMM}{\textsf{MinMM}\xspace}
\newcommand{\MAS}{\textsf{MAS}\xspace}
\newcommand{\minFAS}{\textsf{minFAS}\xspace}
\newcommand{\threeMAS}{\textsf{3MAS}\xspace}

\newcommand{\OMAS}{\textsf{OMAS}\xspace}

\newcommand{\threeOMAS}{\textsf{3OMAS}\xspace}

\DeclareMathOperator{\im}{im}

\usepackage{babel}
\begin{document}

\title{Hardness of Approximation for Morse Matching\footnote{Research supported by the DFG Collaborative Research Center TRR 109 \emph{Discretization in Geometry and Dynamics}.}}
\author{Ulrich Bauer}
\author{Abhishek Rathod}

\affil{Department of Mathematics,
Technical University of Munich, Germany. \texttt{\texttt{mail@ulrich-bauer.org, rathod@ma.tum.de}}}
\authorrunning{U. Bauer and A. Rathod} %

\Copyright{Ulrich Bauer and A. Rathod}%

\EventEditors{}
\EventNoEds{2}
\EventLongTitle{34th Symposium on Computational Geometry (SoCG 2018)}
\EventShortTitle{SoCG 2018}
\EventAcronym{SoCG}
\EventYear{2018}
\EventDate{}
\EventLocation{}
\EventLogo{}
\SeriesVolume{}
\ArticleNo{}
\DOIPrefix{}

\maketitle

\begin{abstract}

Discrete Morse theory has emerged as a powerful tool for a wide range of problems, including the computation of (persistent) homology. In this context, discrete Morse theory is used to reduce the problem of computing a topological invariant of an input simplicial complex to computing the same topological invariant of a (significantly smaller) collapsed cell or chain complex. Consequently, devising methods for obtaining gradient vector fields on complexes to reduce the size of the problem instance has become an emerging theme over the last decade. While computing the optimal gradient vector field on a simplicial complex is NP-hard, several heuristics have been observed to compute near-optimal gradient vector fields on a wide variety of datasets. Understanding the theoretical limits of these strategies is therefore a fundamental problem in computational topology. 

In this paper, we consider the approximability of maximization and minimization variants of the Morse matching problem. We establish hardness results for  \emph{Max-Morse matching} and  \emph{Min-Morse matching}, settling an open problem posed by Joswig and Pfetsch~\cite{JP06}. In particular, we show that, for a simplicial complex of dimension $d \geq 3$ with $n$ simplices, it is NP-hard to approximate \emph{Min-Morse matching} within a factor of $O(n^{1-\epsilon})$, for any $\epsilon>0$. Moreover, we establish hardness of approximation results for \emph{Max-Morse matching} for simplicial complexes of dimension $d \geq 2$, using an L-reduction from \emph{Degree 3 Max-Acyclic Subgraph} to \emph{Max-Morse matching}.

\end{abstract}

\section{Introduction\label{sec:intro}}

Classical Morse theory~\cite{Milnor} provides a method to analyze the topology of a smooth manifold by studying the critical points of smooth functions defined on it. Forman's discrete Morse theory is a combinatorial analogue of Morse theory that is applicable to regular cell complexes~\cite{Fo98}. It has become a popular tool in computational topology and visualization~\cite{Caz03,DeyWangWang,shiva2012}, and is actively studied in algebraic, geometric, and topological combinatorics~\cite{Jollenbeck2009,Ko08,Mil07}. In Forman's theory, discrete Morse functions play the role of smooth Morse functions, whereas discrete gradient vector fields are the analogues of gradient-like vector fields. Forman's theory also has an equivalent graph theoretic formulation~\cite{Ch00}, in which the acyclic matchings (or \emph{Morse matchings}) in the Hasse diagram of a simplicial complex correspond to the discrete gradient vector fields on the simplicial complex.  We shall use the terms \emph{gradient vector fields} and \emph{Morse matchings} interchangeably. In the next subsection, we will elaborate on the practical interest~\cite{Lan16,Ripser,Brendel,BLPS13,HMN,HMMNWJD10,LewinerLT04} in computing gradient vector fields on simplicial complexes with (near-)optimal number of critical simplices (unmatched nodes in the Hasse diagram).

\subsection{Motivation}

The idea of using discrete Morse theory to speedup the computation of homology~\cite{HMMNWJD10}, persistent homology~\cite{Ripser, MN13}, and multidimensional persistence~\cite{Lan16} hinges on the fact that discrete Morse theory helps to reduce the problem of computing homology groups of an input simplicial complex to computing homology groups of a smaller collapsed cell or chain complex. In fact, certain state of the art methods for computing homology groups of complexes ~\cite{HMMNWJD10} and persistent homology of filtrations~\cite{Ripser} depend crucially on discrete Morse theory. In particular, in the numerical experiments for homology computation reported in  Harker et al.~\cite{HMMNWJD10}, a discrete Morse theory based preprocessing step led to a speedup of several orders of magnitude over existing methods on a wide variety of datasets. In a followup work, Harker et al.~\cite{HMN}  devised a discrete Morse theory based framework to efficiently compute the induced map on homology, a problem that arises in Conley index computations. More recently, Brendel et al.~\cite{Brendel} designed a discrete Morse theory based algorithm to compute (typically small) presentations of the fundamental group of finite regular CW-complexes, and certain knot invariants.  

Thus, finding near-optimal gradient vector fields is a central problem computational topology, with a wide range of applications.
However, finding an optimal gradient vector field turns out to be an NP-hard problem, as shown by Joswig et al.~\cite{JP06} via a reduction from the \emph{erasability problem} introduced by E\v gecio\v glu and Gonzalez~\cite{EG96}. 

On the other hand, certain heuristics for Morse matching have been reported to be highly effective, often achieving optimality in practice~\cite{HMMNWJD10,JLT14,Le03a}. This naturally raises the question of approximability: to which extent is it feasible to obtain near-optimal solutions for Morse matching in polynomial time? By establishing bounds on the hardness of approximation, we make it evident that for certain instances, the otherwise effective heuristics would fail to compute near-optimal Morse matchings. 

\subsection{The Morse Matching Problems\label{sec:mmprob}}

The Max-Morse Matching problem (\MaxMM) can be described
as follows: Given a simplicial complex $\mathcal{K}$, compute a gradient
vector field that maximizes the cardinality of matched (regular) simplices,
over all possible gradient vectors fields on $\mathcal{K}$. Equivalently,
the goal is to maximize the number of gradient pairs. For the complementary
problem Min-Morse Matching (\MinMM), the goal is to compute
a gradient vector field that minimizes the number of unmatched (critical)
simplices, over all possible gradient vector fields on $\KCC$. While
the problem of finding an exact optimum are equivalent for \MinMM
and \MaxMM, the approximation variants of these problems have vastly
different flavors, as we shall note in Sections~\ref{sec:hardnessmin}~and~\ref{sec:hardnessmax}.

\subsection{Related work\label{sec:relwork}}

Based on the relationship between erasability and Morse Matching observed by Lewiner~\cite{Le02,Le03a},
Joswig et al.~\cite{JP06} established NP-completeness of the Morse Matching Problem, using a reduction that is not approximation preserving, and posing the approximability
of Morse matching as an open problem~\cite[Sec.~4]{JP06}.
The algorithmic question of finding optimal discrete Morse functions
on simplicial complexes is a well studied problem. Most methods so
far have relied on effective heuristics~\cite{JP06,HMMNWJD10, BLW11,He05,Le03a}.
The first theoretical result in context of Morse matchings was established by Burton et al.~\cite{BLPS13}, who developed a fixed parameter tractable algorithm for computing optimal Morse functions on 3-manifolds. More recently, Rathod et al.~\cite{RBN17} proposed the first approximation algorithms for \MaxMM on simplicial complexes that provide constant factor
approximation bounds for fixed dimension. 

Methodologically, the mechanism of collapses employed in our proof of hardness of approximation for \MaxMM bears notable resemblance to sequential collapsing of Bing's house gadgets used by Malgouryes and Franc\'es~\cite{MF}  and Tancer~\cite{Tan16} for proving NP-hardness results for certain collapsibility problems.

\subsection{Our contributions \label{sec:contrib}}

In Section~\ref{sec:hardnessmin}, using Tancer's result~\cite{Tan16} about NP-completeness
of \emph{collapsibility}, we provide a straightforward proof of
inapproximability of \MinMM on simplicial complexes with dimension $d\geq 3$.
In particular, we prove that, assuming $P\neq \mathit{NP}$, 
there is no $O(n^{1-\epsilon})$-factor approximation algorithm
for \MinMM for any $\epsilon>0$, where $n$ denotes the total number
of simplices in a given complex $\KCC$. 
Then, in Section~\ref{sec:hardnessmax}, we prove that, for any $\epsilon>0$, approximating \MaxMM for simplicial complexes of dimension $d\geq2$ within a factor of $
\left(1 - \frac{1}{4914}\right)+\epsilon$ is NP-hard and approximating it within a factor of $
\left(1 - \frac{1}{702}\right)+\epsilon$ is UGC-hard. In particular, this shows that \MaxMM has no PTAS unless $P=\mathit{NP}$. 

\section{Background and Preliminaries \label{sec:Background}}

\subsection{Simplicial complexes} 
\label{sub:Simplicial complexes}

A $k$\emph{-simplex} $\sigma=\conv V$ is the convex hull
of a set $V$ of $(k+1)$ affinely independent points in $\mathbb{R}^{d}$. We call $k$ the dimension of $\sigma$.
We say that $\sigma$ is \emph{spanned} by the points $V$.
Any nonempty subset of $V$
also spans a simplex, a \emph{face} of $\sigma$. $\sigma$ is a \emph{coface}
of $\tau$ iff $\tau$ is face of $\sigma$. 
We say that $\sigma$ is a \emph{facet} of $\tau$ if $\sigma$ is a face of $\tau$ with $\dim\sigma=\dim\tau-1$. 
A \emph{simplicial complex} $\KCC$  is a collection of simplices that satisfies the following conditions:
\begin{compactitem}
\item	any face of a simplex in $\KCC$ also belongs to $\KCC$, and
\item	the intersection of two simplices $\sigma_1,\sigma_2\in \KCC$ is either empty or a face of both $\sigma_1$ and $\sigma_2$. 
\end{compactitem}
The \emph{underlying space} of $\KCC$ is the union of its simplices, denoted by $|\KCC|$. The underlying space is implicitly used whenever we refer to $\KCC$ as a topological space.

An \emph{abstract simplicial complex} $\SCC$ 
is a collection of finite nonempty sets $A \in \SCC$ such that every nonempty subset of $A$ is also contained in $\SCC$.
The sets in $\SCC$ are called its \emph{simplices}.
For example, the vertex sets of the simplices in a geometric complex form an abstract simplicial complex, called its \emph{vertex scheme}.
If $\KCC$ is a geometric simplicial complex whose vertex scheme is isomorphic to an abstract simplicial complex $\SCC$, then $\KCC$ is a \emph{geometric realization} of $\SCC$. It is unique up to simplicial isomorphism.

We will use the construction of a \emph{pasting map}~\cite{Munkres} to perform vertex and edge identifications on simplicial complexes.
Given a finite abstract simplicial complex $\LCC$, a \emph{labelling}
of the vertices of $\LCC$ is a surjective map $f:\LCC^{(0)}\to C$ where the set $C$ is called the set of \emph{vertex labels}.
Then, the set $\{ f(\sigma) \mid \sigma \in \LCC\}$ is an abstract simplicial complex. Let $\KCC$ be a geometric realization. Then $f$ induces a simplicial quotient map $g:\left|\LCC\right|\to\left|\KCC\right|$, called the \emph{pasting map} associated with $f$. 

In particular, given an equivalence relation $\sim$ on the vertices of $\LCC$, the surjection sending each vertex to its equivalence class induces a pasting map. We will use this construction to perform identifications of simplices. For example, the wedge sum of a collection of \emph{pointed simplicial complexes} (complexes with a distinguished vertex, called the \emph{basepoint}) can be constructed this way using the equivalence relation identifying all basepoints.

\subsection{Discrete Morse theory and Erasability}
\label{sub:Discrete-Morse-Theory}

Our focus in this paper is limited to simplicial complexes, and hence
we restrict the discussion of Forman's discrete Morse theory to simplicial
complexes. We refer to~\cite{Fo02} for a compelling expository introduction. 

A function $f$ on a simplicial complex $\KCC$ is called a \emph{discrete Morse function} if
\begin{compactitem}
\item $f$ is \emph{monotonic}, i.e., $\sigma \subseteq \tau$ implies $f(\sigma) \leq f(\tau)$, and
\item for all $t \in \im(f) $,  $f^{-1}(t)$ is either a singleton $\left\{ \sigma\right\}$ (in which case $\sigma$ is a \emph{critical simplex}) or a pair $\left\{\sigma,\tau\right\}$, where $\sigma$ is a facet of $\tau$ (in which case $\left(\sigma,\tau\right)$ form a \emph{gradient pair} and $\sigma$ and $\tau$ are \emph{regular simplices}). 
\end{compactitem}
Given a discrete Morse function $f$ defined on complex $\KCC$, the \emph{discrete gradient vector field} $\VSS$ of $f$ is the  collection  of pairs of simplices  $\left(\sigma,\tau\right)$, where $\left(\sigma,\tau\right)$ is in $\VSS$ if and only if $\sigma$ is a facet of $\tau$ and $f(\sigma)=f(\tau)$.

Discrete gradient vector fields have a useful interpretation in terms of acyclic graphs obtained from matchings on Hasse diagrams, due to Chari~\cite{Ch00}.
Let $\KCC$ be a simplicial complex, let $\HKCC$ be its Hasse diagram, and let $M$ be a matching in the underlying undirected graph $\HKCC$. Let $\HKCC(M)$ be the directed graph obtained from $\HKCC$ by reversing the direction of each edge of the matching $M$. Then $M$ is a \emph{Morse matching} if and only if $\HKCC(M)$	is a directed acyclic graph. Every Morse matching $M$ on the Hasse diagram $\HKCC$ corresponds to a unique gradient vector field $\VCC_M$ on complex $\KCC$ and vice versa. For a Morse matching $M$, the unmatched vertices correspond to critical simplices of $\VCC_M$, and the matched vertices correspond to the regular simplices of $\VSS_M$.

A non-maximal face $\sigma\in\KCC$ is said to be a \emph{free face} if it
is contained in a unique maximal face $\tau\in\KCC$. %
If $\dim\tau=\dim\sigma+1$,
we say that $\KCC^{\prime} = \KCC \setminus \{\sigma,\tau\}$ arises from $\KCC$ by an \emph{elementary
collapse}, denoted by $\KCC\searrow^{e}\KCC^{\prime}$. Furthermore, we say that
$\KCC$ \emph{collapses} to $\LCC$, denoted by $\KCC\searrow\LCC$,
if there exists a sequence $\KCC=\KCC_{1},\KCC_{2},\dots\KCC_{n}=\LCC$
such that $\KCC_{i}\searrow^{e}\KCC_{i+1}$ for all $i$. 
If $\KCC\searrow\LCC$, or more generally, if $\KCC$ and $\LCC$ are related through a sequence
collapses and expansions (inverses of collapses),
then the two complexes are \emph{simple-homotopy equivalent
type}. In particular, $\KCC$ and $\LCC$ are homotopy equivalent. Furthermore, if $\KCC$ collapses to a point, one says that $\KCC$ is collapsible
and writes $\KCC\searrow0$.

A simplicial collapse can be encoded by a discrete gradient.
\begin{thm}[Forman~\cite{Fo98}, Theorem 3.3]
  \label{Gradient Collapsing Theorem}
  Let $\KCC$ be a simplicial complex with a discrete gradient vector field $\VSS$,
  and let $\LCC \subseteq \KCC$ be a subcomplex.
  If $\KCC \setminus \LCC$ is a union of pairs in $\VSS$, then $\KCC \searrow \LCC$.
\end{thm}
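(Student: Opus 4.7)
The plan is to prove the statement by induction on the number of gradient pairs contained in $\KCC \setminus \LCC$, with the inductive step producing one elementary collapse at a time. The base case is trivial: if no pair of $\VSS$ lies in $\KCC \setminus \LCC$, then $\KCC = \LCC$ and $\KCC \searrow \LCC$ via the empty sequence of collapses. For the inductive step, the heart of the matter is to exhibit a single pair $(\sigma,\tau)\in\VSS$ with $\sigma,\tau\in\KCC\setminus\LCC$ such that $\sigma$ is a free face of $\KCC$ having $\tau$ as its unique coface; once such a pair is found, the elementary collapse $\KCC \searrow^e \KCC'=\KCC\setminus\{\sigma,\tau\}$ is legal, the restricted matching $\VSS\setminus\{(\sigma,\tau)\}$ is still an acyclic (hence a discrete gradient) matching on $\KCC'$ because acyclicity passes to induced subgraphs, and $\KCC' \setminus \LCC$ is covered by the remaining pairs, so the inductive hypothesis gives $\KCC'\searrow\LCC$ and we chain the collapses together.

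To produce the collapsible pair I would work in the modified Hasse diagram $\HKCC(M)$, which is a DAG by Chari's characterization recalled earlier in the paper. Restricting $\HKCC(M)$ to the vertex set $\KCC\setminus\LCC$ yields a nonempty sub-DAG, so it has at least one source $s$. I then argue in two short steps. First, $s$ must be the \emph{lower} element of a matched pair: if instead $s=\tau$ for some $(\sigma,\tau)\in\VSS$, then $\sigma\in\KCC\setminus\LCC$ as well (pairs cannot straddle the boundary of the subcomplex $\LCC$ since every face of a simplex in $\LCC$ lies in $\LCC$), and the reversed edge $\sigma\to\tau$ would give $s$ an incoming arc in the sub-DAG, contradicting sourcehood. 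So $s=\sigma$ with $(\sigma,\tau)\in\VSS$. Second, $\sigma$ is free in $\KCC$: any coface $\tau'\neq\tau$ of $\sigma$ contributes an unreversed arc $\tau'\to\sigma$ in $\HKCC(M)$, and since $\sigma$ is a source of the sub-DAG we must have $\tau'\in\LCC$; but then $\sigma$, a face of $\tau'$, would lie in the subcomplex $\LCC$, contradicting $\sigma\in\KCC\setminus\LCC$. Hence $\sigma$ has $\tau$ as its unique coface in $\KCC$, and $\dim\tau=\dim\sigma+1$ because matched pairs are facet pairs, so the elementary collapse is valid.

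The main obstacle, and the only real content of the argument, is the existence statement for the collapsible pair; once it is in hand, the induction and the bookkeeping on $\VSS'$ and $\KCC'\setminus\LCC$ are routine. The subtlety is that neither acyclicity of $\VSS$ alone nor the subcomplex property of $\LCC$ alone suffices: the first guarantees a source in $\HKCC(M)$ but not in the restriction to $\KCC\setminus\LCC$, and the second is what prevents stray cofaces in $\LCC$ from obstructing freeness. Combining the two by restricting the DAG to $\KCC\setminus\LCC$ and then using $\LCC$'s downward closure to exclude the problematic incoming arcs is exactly the leverage that makes the proof work.
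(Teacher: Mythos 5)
Your proof is essentially correct, but note that the paper itself gives no proof of this statement: it is imported verbatim from Forman, so there is nothing internal to compare against. Your argument --- induct on the number of pairs, pass to the modified Hasse diagram $\HKCC(M)$, take a source of the sub-DAG induced on $\KCC \setminus \LCC$, and show it is the lower element of a pair whose only coface is its partner --- is the standard way to prove Forman's theorem, and the key insight (combining acyclicity with the downward closure of $\LCC$ to certify freeness of the source) is exactly right. Two small slips are worth fixing. First, your parenthetical justification that pairs cannot straddle $\LCC$ is misattributed: the subcomplex property only gives $\tau \in \LCC \Rightarrow \sigma \in \LCC$, whereas you need the converse direction ($\tau \notin \LCC \Rightarrow \sigma \notin \LCC$); this instead follows from the hypothesis that $\KCC \setminus \LCC$ is a union of pairs of $\VSS$ together with the fact that $\VSS$ is a matching, so each simplex of $\KCC\setminus\LCC$ lies in a pair entirely contained in $\KCC\setminus\LCC$. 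Second, the Hasse diagram only records codimension-one incidences, so your source argument literally rules out only codimension-one cofaces $\tau' \neq \tau$ of $\sigma$; to conclude that $\tau$ is the \emph{unique} proper coface of $\sigma$ you need the diamond property of simplicial complexes (any coface $\rho$ of $\sigma$ with $\dim\rho \geq \dim\sigma + 2$ contains at least two codimension-one cofaces of $\sigma$, so its existence would contradict uniqueness of $\tau$ at codimension one). With those two sentences added, the induction and the bookkeeping on $\KCC' = \KCC\setminus\{\sigma,\tau\}$ go through exactly as you describe.
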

In this case, we say that the collapse $\KCC \searrow \LCC$ is \emph{induced by} the gradient $\VSS$. As a consequence of this theorem, we obtain:

\begin{thm}[Forman~\cite{Fo98}, Corollary 3.5]
Let $\KCC$ be a simplicial complex with a discrete gradient vector field $\VSS$ and let $m_d$ denote the number of critical simplices of $\VSS$ of dimension $d$. Then $\KCC$ is homotopy equivalent to a CW complex with exactly $m_d$ cells of dimension $d$.
\end{thm}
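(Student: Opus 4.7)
The plan is to filter $\KCC$ by sublevel sets of a discrete Morse function $f$ whose gradient field is $\VSS$ (any linear extension of the acyclic modified Hasse diagram $\HKCC(M)$, with matched pairs contracted, yields such an $f$), and to build a homotopy-equivalent CW complex $L$ incrementally along the filtration, attaching exactly one cell per critical simplex. Enumerate the distinct values $t_1 < \cdots < t_N$ attained by $f$ and set $\KCC_i = f^{-1}((-\infty, t_i])$; by monotonicity of $f$, each $\KCC_i$ is a subcomplex of $\KCC$, and by the defining property of a discrete Morse function, the slice $\KCC_i \setminus \KCC_{i-1} = f^{-1}(t_i)$ is either a single critical simplex $c_i$ or a single gradient pair $\{\sigma_i, \tau_i\} \in \VSS$.

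Next, I would treat the two kinds of slices separately. When the slice is a gradient pair, it is a union of pairs in $\VSS$, so the Gradient Collapsing Theorem yields $\KCC_i \searrow \KCC_{i-1}$ and in particular a deformation retraction $\KCC_i \to \KCC_{i-1}$. When the slice is a critical simplex $c_i$, every proper face $\tau \subsetneq c_i$ has $f(\tau) < t_i$ (since $c_i$ is the unique simplex with $f$-value $t_i$), so $\partial c_i \subseteq \KCC_{i-1}$ and $\KCC_i$ arises from $\KCC_{i-1}$ by attaching a $(\dim c_i)$-cell along the inclusion $\partial c_i \hookrightarrow \KCC_{i-1}$.

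Finally, I would construct $L_i$ together with a homotopy equivalence $h_i \colon \KCC_i \to L_i$ by induction, starting with $L_0 = \emptyset$. At a gradient-pair slice, set $L_i = L_{i-1}$ and define $h_i$ as $h_{i-1}$ composed with the deformation retraction from the first case. At a critical-simplex slice, form $L_i$ from $L_{i-1}$ by attaching a $(\dim c_i)$-cell along the composition $h_{i-1} \circ (\partial c_i \hookrightarrow \KCC_{i-1})$, and extend $h_{i-1}$ by the identity on the new cell. Then $L = L_N$ has exactly $m_d$ cells of dimension $d$ for every $d$ and is homotopy equivalent to $\KCC = \KCC_N$, which is precisely the stated conclusion. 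The only non-formal step, which I would invoke as a standard consequence of the homotopy extension property for CW pairs, is that attaching a cell to homotopy-equivalent spaces along attaching maps that correspond under the equivalence yields homotopy-equivalent CW complexes; this is what keeps the inductive step consistent at critical-simplex slices, and apart from this black box the entire argument is a direct application of the Gradient Collapsing Theorem.
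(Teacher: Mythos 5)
Your argument is correct and is essentially the proof the paper points to: the paper does not reprove this statement but cites Forman's Corollary~3.5 and presents it as a consequence of the Gradient Collapsing Theorem, and your filtration-by-sublevel-sets argument (collapse across gradient-pair slices, attach a cell at each critical slice) is exactly that standard derivation. The two steps you leave as black boxes --- that an acyclic matching admits a discrete Morse function whose level sets are singletons or single gradient pairs, and that attaching cells along corresponding maps preserves homotopy equivalence --- are both standard and correctly identified as such.
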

In particular, a discrete gradient vector field on $\KCC$ with $m_d$ critical simplices of dimension~$d$ gives rise to a chain complex having dimension $m_d$ in each degree $d$, whose homology is isomorphic to that of $\KCC$. This condensed representation motivates the algorithmic search for (near-)optimal Morse matchings.

We will later use the following elementary lemma about gradient vector fields.
\begin{lem}
\label{lem:uniqueCriticalVertex}
Let $\KCC$ be a connected simplicial complex, let $p$ be a vertex of $\KCC$, and let $\VSS_1$ be a discrete gradient on $\KCC$ with $m_0>1$ critical simplices of dimension $0$ and $m$ critical simplices in total. Then there exists another gradient vector field $\widetilde\VSS$ on $\KCC$ with $p$ as the only critical simplex of dimension $0$ and $m - 2(m_0-1)$ critical simplices  in total. 
\end{lem}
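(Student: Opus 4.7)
The plan is to apply Forman's cancellation theorem \cite{Fo98} iteratively, each application removing one critical vertex and one critical edge, and then to perform a final $V$-path reversal that places $p$ as the unique surviving critical vertex.

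The vertex-edge pairs of $\VSS_1$ form a spanning forest $F$ of the $1$-skeleton $\KCC^{(1)}$: I orient each matched edge from its matched vertex to the other endpoint, so the resulting $m_0$ rooted trees -- call them \emph{basins} -- are each rooted at a critical vertex. Following out-edges in $F$ from any vertex $v$ traces the unique $V$-path in dimensions $0$-$1$ from $v$ to the root of $v$'s basin.

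Suppose $e = \{u,v\}$ is a critical edge whose endpoints lie in distinct basins with roots $q_u$ and $q_v$. Then the tree $V$-path from $u$ to $q_u$ is the only $V$-path from $\partial e$ to $q_u$ (the corresponding path from $v$ terminates at $q_v \neq q_u$), so Forman's cancellation removes $q_u$ and $e$ from the critical set, decreasing $m_0$ by one and the total critical count by two. Since $q_u \neq q_v$, at least one of them differs from $p$; cancelling that one preserves $p$ as critical. Iterating this step until a single basin remains yields a gradient $\VSS^*$ with one critical vertex $q^*$ and exactly $m - 2(m_0 - 1)$ critical simplices in total. If $q^* = p$ we set $\widetilde{\VSS} = \VSS^*$; otherwise, $F$ is now a spanning tree and the unique $V$-path from $p$ to $q^*$ can be reversed, swapping their matched/critical roles without altering any critical counts.

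\textbf{Main obstacle.} The crux is showing that, as long as more than one basin remains, there exists a critical edge bridging two distinct basins. Connectedness of $\KCC^{(1)}$ guarantees that \emph{some} edge connects different basins, but a priori every such edge could be matched upwards with a $2$-simplex. Handling that case requires a preparatory gradient-path reversal in dimensions $1$-$2$ that trades a matched-up bridging edge for a critical one located elsewhere in the complex; both the existence of the required $V$-path and its uniqueness (needed to preserve acyclicity after the swap) must be deduced from the acyclicity of the modified Hasse diagram $\HKCC(\VSS)$ together with the connectedness of $\KCC$. Establishing this existence-and-uniqueness claim is the central combinatorial work, after which the rest of the argument is routine bookkeeping with $V$-paths.
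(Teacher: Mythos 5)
There is a genuine gap, and you have located it yourself: your argument by iterated cancellation is sound only if, whenever more than one basin remains, some edge joining two distinct basins is \emph{critical} rather than matched upward with a $2$-simplex. You state that resolving this is ``the central combinatorial work'' but then only sketch a speculative fix (a preparatory $(1,2)$-dimensional $V$-path reversal), without establishing that such a reversal exists, that it preserves acyclicity, or that it actually produces a critical bridging edge. As written, the proof is therefore incomplete at exactly the step that carries all the difficulty. The missing ingredient is a connectivity statement: if $\LCC$ denotes the set of $1$-simplices paired with $2$-simplices in the given gradient, then $\KCC^{(1)} \setminus \LCC$ is still connected (this is \cite[Lemma~4.2]{JP06}, which the paper invokes). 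Granting that fact, your obstacle disappears with no need for any $(1,2)$-reversal: the basins partition the vertex set, every downward-matched edge lies inside a single basin, and connectedness of $\KCC^{(1)} \setminus \LCC$ forces some edge of $\KCC^{(1)} \setminus \LCC$ to join two distinct basins; that edge is necessarily critical, and your cancellation step applies.

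It is also worth noting that the paper's proof avoids cancellation altogether. It simply discards all $(0,1)$-pairs of the given gradient, keeps the remaining pairs $\WSS$ (all of which live in $(\KCC \setminus \KCC^{(1)}) \cup \LCC$), and builds a fresh set of $(0,1)$-pairs by a depth-first search spanning tree of the connected graph $\KCC^{(1)} \setminus \LCC$ rooted at $p$; the union is acyclic because the two pieces are supported on disjoint sets of simplices, and the count $m - 2(m_0-1)$ follows from an Euler-characteristic computation on the $1$-skeleton. Your rerooting step at the end (reversing the tree path from $p$ to $q^*$) is fine, but if you patch your proof with the connectivity lemma you should make explicit that each cancellation preserves the tree structure of the basins, since the uniqueness of the $V$-path from $\partial e$ is what licenses the next cancellation.
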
 
\begin{pf} Let $\LCC$ be the set of all the $1$-simplices paired with $2$-simplices in $\VSS$.
Let $\KCC_1$ be the $1$-skeleton of $\KCC$. Then, by \cite[Lemma~4.2]{JP06}, the $1$-complex $\KCC_1 \setminus \LCC$ is connected, and one can compute a gradient vector field $\VSS_1$ on $\KCC_1 \setminus \LCC$ with $p$ as the single critical $0$-simplex 
using depth first search starting from $p$ (see, e.g., \cite{RBN17}). Let $\WSS \subset \VSS$ consist of all gradient pairs of $\VSS$ contained in $\KCC \setminus (\KCC_1 \setminus \LCC) = (\KCC \setminus \KCC_1) \cup \LCC$. Note that, by construction, $\WSS$ does not contain any pairs of dimensions $(0,1)$, while  $\VSS_1$ has only such pairs.
Since the gradient vector fields $\VSS_1$ and $\WSS$ are defined on disjoint sets of simplices, it follows that $\widetilde\VSS = \VSS_1\cup \WSS$ is a gradient vector field with the desired property.
\end{pf}

\begin{cor} \label{cor:unicollapse} Given a collapsible simplicial complex $\KCC$ and an arbitrary vertex $p \in \KCC$, there exists a gradient vector field $\VSS$ on $\KCC$ with $p$ as the unique critical simplex of $\VSS$.
\end{cor}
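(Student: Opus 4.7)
The plan is to combine collapsibility with Lemma~\ref{lem:uniqueCriticalVertex}: first produce any gradient vector field on $\KCC$ with a single critical simplex (necessarily a vertex), and then use the lemma's construction to move that critical vertex to the prescribed $p$.

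For the first step, since $\KCC$ is collapsible there is a sequence of elementary collapses reducing $\KCC$ to a single vertex $q$. Each elementary collapse removes a pair consisting of a free face and its unique coface; let $M$ be the collection of all such pairs. Then $M$ is a matching on the Hasse diagram of $\KCC$ with $q$ as its only unmatched simplex, and acyclicity of $M$ in the sense of Chari is witnessed by the total order induced by the collapse sequence: in the Hasse diagram with matched edges reversed, every directed edge runs from a later-removed simplex to an earlier-removed one, precluding directed cycles. Hence $M$ is a Morse matching, defining a gradient vector field $\VSS_0$ on $\KCC$ whose unique critical simplex is $q$.

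For the second step, if $q = p$ we are done; otherwise the construction from the proof of Lemma~\ref{lem:uniqueCriticalVertex}, applied to $\VSS_0$ with target vertex $p$, yields the desired gradient. Namely, performing depth-first search from $p$ on the connected $1$-complex obtained by removing from the $1$-skeleton of $\KCC$ all $1$-simplices paired with $2$-simplices in $\VSS_0$, and combining the resulting $(0,1)$-pairs with the higher-dimensional pairs of $\VSS_0$, produces a gradient $\widetilde{\VSS}$ having $p$ as its unique critical $0$-simplex. Since $\VSS_0$ has no critical simplices of positive dimension, neither does $\widetilde{\VSS}$, so $p$ is its only critical simplex overall. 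The only point requiring care — essentially the sole obstacle — is that Lemma~\ref{lem:uniqueCriticalVertex} is stated under the hypothesis $m_0 > 1$, whereas here $m_0 = 1$; however, the lemma's construction is plainly valid for every $m_0 \geq 1$, and the formula $m - 2(m_0 - 1)$ specializes to $m$ in this case, reflecting that only the identity of the critical vertex is altered while the total count is preserved.
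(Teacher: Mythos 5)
Your proof is correct and follows the route the paper intends: the corollary is stated without proof as an immediate consequence of \cref{lem:uniqueCriticalVertex}, the implicit argument being exactly yours --- a collapse sequence yields a Morse matching with a single critical vertex (acyclicity witnessed by the removal order), and the lemma's depth-first-search construction relocates that critical vertex to $p$; your observation that the lemma's hypothesis $m_0>1$ is inessential and that the count $m-2(m_0-1)$ specializes correctly is the right way to close that small gap. (One cosmetic slip: in the reversed Hasse diagram the unmatched edges actually point from earlier-removed simplices to later-removed ones, but since removal time is constant along matched edges and strictly monotone along unmatched ones, the acyclicity conclusion stands.)
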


Borrowing and extending the terminology
used in~\cite{EG96}, we make the following definitions:
A maximal face $\tau$ in a simplicial complex $\KCC$ is called an \emph{internal simplex} if it has no free face. 
If a $2$-complex $\KCC$ collapses to a $1$-complex, we say that $\KCC$ is \emph{erasable}. Moreover, for a $2$-complex $\KCC$, the quantity $\erk$ is the minimum number
of internal $2$-simplices that need to be removed so that the resulting complex collapses to a $1$-complex. Equivalently, it is the minimum number of critical $2$-simplices of any discrete gradient on $\KCC$.
For a complex $\KCC$, we denote the set of $d$-simplices of $\KCC$ by $\dcomplex{\KCC}{d}$. 

\begin{defn}[Erasable subcomplex of a complex] Given a $2$-complex $\KCC$, we say that a subcomplex $\LCC \subseteq \KCC$ is an \emph{erasable subcomplex of $\KCC$ (through the gradient $\VSS$)} if there exists another subcomplex $\MCC \subseteq \KCC$ with $\KCC \searrow \MCC$ (induced by the gradient $\VSS$) such that $\KCC \setminus \MCC \subseteq \LCC$ and 
$\twocomplex\KCC \setminus \twocomplex\MCC = \twocomplex\LCC$.
\end{defn}

\begin{defn}[Eventually free] We say that a simplex $\sigma$ is \emph{eventually free (through the gradient $\VSS$)} in a complex $\KCC$ if there exists a subcomplex $\LCC$ of $\KCC$ such that $\KCC \searrow \LCC$ (induced by $\VSS$) and $\sigma$ is free in $\LCC$.
Equivalently, $\KCC$ collapses further to a subcomplex not containing~$\sigma$.
\end{defn}

\begin{lem}
\label{lem:unionErasable}
If $\LCC_1,\LCC_2$ are erasable subcomplexes of a 2-complex $\KCC$, then so is their union.
\end{lem}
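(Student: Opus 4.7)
The plan is to construct a discrete gradient vector field $\VSS$ on $\KCC$ witnessing the erasability of $\LCC_1 \cup \LCC_2$. Let $\VSS_1, \VSS_2$ be gradients on $\KCC$ witnessing the erasability of $\LCC_1, \LCC_2$, with induced collapses $\KCC \searrow \MCC_1$ and $\KCC \searrow \MCC_2$. Observing that $\twocomplex{\MCC_i} = \twocomplex{\KCC} \setminus \twocomplex{\LCC_i}$, the natural target for the combined collapse is a subcomplex $\MCC$ with $\twocomplex{\MCC} = \twocomplex{\MCC_1} \cap \twocomplex{\MCC_2} = \twocomplex{\KCC} \setminus (\twocomplex{\LCC_1} \cup \twocomplex{\LCC_2})$ and $\MCC \supseteq \KCC \setminus (\LCC_1 \cup \LCC_2)$; such an $\MCC$ automatically satisfies both requirements $\KCC \setminus \MCC \subseteq \LCC_1 \cup \LCC_2$ and $\twocomplex{\KCC} \setminus \twocomplex{\MCC} = \twocomplex{(\LCC_1 \cup \LCC_2)}$ of the erasability definition.

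I would proceed in two stages. First, perform the collapse $\KCC \searrow \MCC_1$ via $\VSS_1$; this removes exactly $\twocomplex{\LCC_1}$ together with some 1-simplices of $\LCC_1$, and the 2-simplices of $\LCC_2$ still present in $\MCC_1$ are precisely $\twocomplex{\LCC_2} \setminus \twocomplex{\LCC_1} = \twocomplex{\LCC_2} \cap \twocomplex{\MCC_1}$. Second, I would show that $\LCC_2 \cap \MCC_1$ is an erasable subcomplex of $\MCC_1$, producing a further collapse $\MCC_1 \searrow \MCC$ that eliminates precisely those remaining 2-simplices, with the removed 1-simplices lying inside $\LCC_2 \cap \MCC_1$. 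The natural second-stage candidate is the restriction of $\VSS_2$ to $\MCC_1$, consisting of the pairs of $\VSS_2$ whose both simplices lie in $\MCC_1$; this restriction is automatically acyclic, since acyclicity is inherited from $\VSS_2$.

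The main obstacle is that $\VSS_2$ may pair some $t \in \twocomplex{\LCC_2} \cap \twocomplex{\MCC_1}$ with an edge $e \in \LCC_1 \cap \LCC_2$ that was removed in the first stage, so this pair is absent from the restriction and $t$ is left unpaired. Handling this requires an exchange-style argument: either using the flexibility inherent in the choice of $\VSS_1$ so that such edges $e$ are not removed in the first stage, or tracing, through the bipartite structure formed by the matched pairs of $\VSS_1$ and $\VSS_2$, a chain of substitutions that produces an alternative free face for $t$ in the current state of $\MCC_1$. The key point enabling this is that removing the 2-simplices in $\twocomplex{\LCC_1} \setminus \twocomplex{\LCC_2}$ during the first stage can only increase, never decrease, the set of 1-simplices of $\MCC_1$ eligible to become free. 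Verifying that such substitutions can be carried out consistently for all conflicting $t$ while preserving acyclicity of the resulting matching, and ensuring that no 1-simplex outside $\LCC_1 \cup \LCC_2$ is removed, is the main technical step. Once this is achieved, concatenating the matched pairs used across the two stages (and extending by any pairs of a gradient on $\MCC$) yields the desired gradient $\VSS$ on $\KCC$ that realizes $\LCC_1 \cup \LCC_2$ as erasable.
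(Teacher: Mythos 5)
Your overall strategy---collapse $\KCC \searrow \MCC_1$ via $\VSS_1$, then restrict $\VSS_2$ to the pairs lying entirely in $\MCC_1$---is exactly the paper's. However, the ``main obstacle'' you identify, and on which you spend most of your effort, is vacuous: if a $2$-simplex $t \in \twocomplex{\LCC_2}\cap\twocomplex{\MCC_1}$ is paired in $\VSS_2$ with an edge $e$, then $e$ is a face of $t$, and since $\MCC_1$ is the result of a collapse it is a subcomplex of $\KCC$ and hence contains every face of $t$; so $e \in \MCC_1$ and the pair $(e,t)$ survives the restriction. No $2$-simplex of $\LCC_2$ remaining in $\MCC_1$ is ever left unpaired, and no exchange argument is needed.

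The genuine difficulty, which your proposal does not resolve, lies elsewhere: you must show that the restricted matching $\VSS_{12} = \{(\sigma,\tau)\in\VSS_2 \mid \sigma,\tau\in\MCC_1\}$ actually induces a collapse of $\MCC_1$ removing all of its pairs, i.e., that each edge $\sigma$ occurring in $\VSS_{12}$ is eventually free in $\MCC_1$. Acyclicity of the restricted matching alone does not give this; one also needs the complement of the union of its pairs to be a subcomplex, which amounts to the eventual freeness of each paired edge. Your observation that deleting the $2$-simplices of $\twocomplex{\LCC_1}\setminus\twocomplex{\LCC_2}$ can only help edges become free is the right germ, but you deploy it only as a heuristic for the unnecessary exchange step and never turn it into an argument. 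The paper closes this gap by first normalizing $\VSS_1,\VSS_2$ to contain only $(1,2)$-pairs whose edges are eventually free, and then arguing by induction along the collapsing order of $\VSS_2$: for a pair $(\sigma,\tau)\in\VSS_{12}$, any other $2$-coface $\psi$ of $\sigma$ in $\MCC_1$ must occur in a pair $(\phi,\psi)\in\VSS_2$ with $\phi$ eventually free (since $\psi$ had to be removed before $\sigma$ could become free in the $\VSS_2$-collapse), and $\psi\in\MCC_1$ forces $\phi\in\MCC_1$, so this pair belongs to $\VSS_{12}$ and, inductively, $\psi$ is eventually removed; hence $\sigma$ is eventually free in $\MCC_1$. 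As written, your proof explicitly defers ``the main technical step,'' and the step it defers is aimed at the wrong problem, so the argument is incomplete.
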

\begin{proof}
Let $\VSS_1$ be a discrete gradient erasing $\LCC_1$, and $\VSS_2$ a discrete gradient erasing $\LCC_2$.
Without loss of generality, we may assume that both gradients have only pairs $(\sigma,\tau)$ of dimension $(1,2)$, and that all such pairs are in $\LCC_1$ or $\LCC_2$, respectively, and $\sigma$ is eventually free; removing all other pairs still yields an erasing gradient.
Now consider the collapse $\KCC \searrow \MCC_1$ induced by $\VSS_1$ and the collapse $\KCC \searrow \MCC_2$ induced by $\VSS_2$. 
Restricting the gradient $\VSS_2$ to the subcomplex $\MCC_1$, we obtain
a gradient $\VSS_{12} = \{(\sigma,\tau) \in \VSS_2 \mid \sigma, \tau \in \MCC_1\}$. 
By induction, 
each~$\sigma$ appearing in such a pair $(\sigma,\tau)$ is eventually free in $\MCC_1$, since any $2$-simplex $\psi \in \MCC_1$ that is a coface of $\sigma$ other than $\tau$ must appear in a pair $(\phi,\psi) \in \VSS_{12}$ by the definition of $\VSS_{12}$ and the assumption that $\phi$ is eventually free through the gradient $\VSS_2$.
Thus, $\KCC$ collapses to a complex that contains no $2$-simplices of either $\LCC_1$ or $\LCC_2$, as claimed.
\end{proof}

\subsection{Approximation algorithms}

An $\alpha$-\emph{approximation algorithm} for an optimization problem is a polynomial-time algorithm that, for all instances of the problem, produces a solution whose value is within a factor $\alpha$ of the value of an optimal solution. The factor $\alpha$
is called the \emph{approximation ratio} of the algorithm. An \emph{approximation preserving reduction} is a procedure for transforming an optimization problem $A$ to an optimization problem $B$, such that an $\alpha$-approximation algorithm for $B$ implies an $h(\alpha)$-approximation algorithm for $A$, for some function $h$. Then, if $A$ is hard to approximate within factor $h(\alpha)$, the reduction implies that $B$ is hard to approximate within factor $\alpha$.

We will use a particular important and well-studied class of approximation preserving reductions, called \emph{L-reductions}, which provide a simple and effective tool in proving hardness of approximability results~\cite{PY91, WS10}. 
To give the definition, consider a maximization problem $A$ 
with a non-negative integer valued objective function $m_{A}$.
Given an instance $x$ of $A$, the goal is to find a solution~$y$ (among a finite set of feasible solutions) maximizing the
objective function $m_{A}(x,y)$. Define $\opa(x)$ as
the maximum value of the objective function on input $x$.   

An \emph{L-reduction}
from one optimization problem $A$ to another optimization problem
$B$ is a pair of functions $f$ and $g$ that are computable in polynomial
time and satisfy the following conditions:
\begin{enumerate}
\item The function $f$ maps instances of $A$ to instances of $B$.
\item There is a positive constant $\mu$ such that, for all instances $x$ of $A$, \[\opb(f(x))\leq \mu \, \opa(x) . \]
\item The function $g$ maps instances of $A$ and solutions of $B$ to solutions of $A$.
\item There is a positive constant $\nu$ such that, for any instance $x$ of $A$ and any solution $y$ of $f(x)$, we have
\[\opa(x) - m_A(x,g(x,y)) \leq \nu \left( \opb(f(x)) - m_B(f(x),y) \right) . \]
\end{enumerate}
If $\mu=\nu=1$, the reduction is \emph{strict}.

We will use the following straightforward fact about L-reductions for proving hardness of approximation bounds. 
\begin{thm}[Williamson and Shmoys, \cite{WS10}, Theorem~16.5]\label{thm:newfactor} If there is an L-reduction with parameters $\mu$ and $\nu$ from a maximization problem $A$ to another maximization problem $B$, and there is a $(1-\delta)$-approximation algorithm for $B$,
then there is a $(1-\mu\nu\delta)$-approximation algorithm for $A$.
\end{thm}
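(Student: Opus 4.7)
The plan is to produce a polynomial-time approximation algorithm for $A$ by composing the three ingredients handed to us: the L-reduction maps $f$ and $g$, and the hypothesized $(1-\delta)$-approximation algorithm for $B$. Given an instance $x$ of $A$, I would first use $f$ to build the instance $f(x)$ of $B$, then run the given approximation algorithm on $f(x)$ to obtain a feasible solution $y$ satisfying $m_B(f(x),y) \ge (1-\delta)\,\opb(f(x))$, and finally apply $g$ to output the feasible solution $g(x,y)$ of $A$. Since $f$, $g$, and the approximation algorithm are each polynomial-time, so is the composed procedure.

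The remaining task is to verify that $g(x,y)$ achieves approximation ratio $1 - \mu\nu\delta$. For this I would chain the two quantitative conditions of the L-reduction with the approximation guarantee on $y$. Condition (4) applied to the pair $(x,y)$ gives
\[
\opa(x) - m_A(x, g(x,y)) \le \nu\bigl(\opb(f(x)) - m_B(f(x),y)\bigr),
\]
and the guarantee on $y$ bounds the right-hand side by $\nu\delta\,\opb(f(x))$. Invoking condition (2), which asserts $\opb(f(x)) \le \mu\,\opa(x)$, then yields
\[
\opa(x) - m_A(x, g(x,y)) \le \mu\nu\delta\,\opa(x),
\]
which rearranges to $m_A(x,g(x,y)) \ge (1 - \mu\nu\delta)\,\opa(x)$, as required.

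I do not anticipate a genuine obstacle: the argument is a direct unwinding of the L-reduction axioms together with the hypothesized approximation guarantee on $B$. The only subtlety worth flagging is that condition (4) is phrased in terms of the \emph{absolute gap} $\opa(x) - m_A(x,g(x,y))$, not a multiplicative ratio, so one must explicitly invoke condition (2) to convert the absolute bound into a multiplicative one against $\opa(x)$; this is where the factor $\mu$ (as opposed to merely $\nu$) enters the final ratio.
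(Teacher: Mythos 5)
Your proposal is correct: the paper itself states this result as a citation to Williamson and Shmoys without reproducing a proof, and your argument is exactly the standard derivation --- compose $f$, the $(1-\delta)$-approximation for $B$, and $g$; then chain condition (4), the approximation guarantee, and condition (2) to bound the absolute gap $\opa(x) - m_A(x,g(x,y))$ by $\mu\nu\delta\,\opa(x)$. Your remark about why condition (2) is needed to convert the additive bound into a multiplicative ratio is also the right observation.
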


\subsection{Acyclic subgraphs}

We recall some concepts and problems from graph theory that will be used in our reductions. A directed graph $\GCC$ with vertex set $\VCC$ and edge set $\ECC$
is written as $\GCC=(\VCC,\ECC)$. 
A directed graph is called an \emph{oriented graph} if no pair of vertices is connected by an anti-parallel pair of edges.
In other words, an oriented graph is a directed graph without $2$-cycles or loops.
Note that in contrast to a general directed graph, an oriented graph always has a simple \emph{underlying undirected graph}, which is therefore a simplicial complex. We will be making use of this fact in \cref{sec:hardnessmax}. 

The problem of finding the \emph{maximum acyclic subgraph} (\MAS) of a given directed graph $\GCC=(\VCC,\ECC)$ consists of determining a maximum subset $\ECC_{\mathit{max}}\subseteq\ECC$ for which the subgraph $\GCC_{\mathit{max}}=(\VCC,\ECC_{\mathit{max}})$ has no directed cycles. A \emph{feedback arc set} is a set of edges whose removal leaves a directed acyclic graph. A \emph{minimum feedback arc set} is a feedback arc set of minimum cardinality. The problem \minFAS of finding such a set is thus complementary to \MAS.

A \emph{directed degree-3 graph} is a directed graph with total degree (indegree plus outdegree) at most $3$. 
The restriction of the problem \MAS to directed degree-3 graphs is denoted by \threeMAS.
Moreover, the problem \MAS restricted to oriented graphs is denoted by \OMAS, and the restriction to oriented degree-3 graphs is denoted by \threeOMAS.

We will show that there is a L-reduction from \MAS to \OMAS, allowing us to consider only oriented graphs later.

\begin{thm}
\label{thm:MAStoOMAS}
There is a strict reduction
from \MAS to \OMAS,
and from \threeMAS to \threeOMAS.
\end{thm}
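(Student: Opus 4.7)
The plan is to construct the reduction $f$ by deleting, for each 2-cycle of $G$, one of its two anti-parallel edges in a way that preserves the MAS value, producing an oriented graph $G'=f(G)$. First, $f$ discards all loops of $G$, since a loop can never appear in any acyclic subgraph. Next, $f$ processes each anti-parallel pair $\{(u,v),(v,u)\}$ in turn, using reachability in $G\setminus\{(u,v),(v,u)\}$ to decide: delete $(v,u)$ if $u\leadsto v$ but $v\not\leadsto u$, delete $(u,v)$ in the symmetric situation, and delete either edge otherwise. Each reachability query is polynomial, so $f$ runs in polynomial time. The solution map $g(G,y)=y$ is the identity: every acyclic subgraph of $f(G)$ is a subset of $E$ that is automatically acyclic in $G$, with $|g(G,y)|=|y|$.

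With this setup, both L-reduction conditions with $\mu=\nu=1$ collapse to the single equality $\opt{\OMAS}(f(G))=\opt{\MAS}(G)$, which I would establish by induction on the number of 2-cycles from a single-step MAS-preservation lemma. Since any acyclic subgraph of $G$ omits at least one edge of every 2-cycle, $\opt{\MAS}(G)=\max\{\opt{\MAS}(G-(u,v)),\,\opt{\MAS}(G-(v,u))\}$, so the lemma reduces to showing that the rule picks an edge whose removal attains this maximum. In the case $u\leadsto v$ but $v\not\leadsto u$, appending $(u,v)$ to a maximum acyclic subgraph of $G\setminus\{(u,v),(v,u)\}$ creates no cycle, yielding $\opt{\MAS}(G-(v,u)) \ge \opt{\MAS}(G\setminus\{(u,v),(v,u)\})+1$. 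A matching upper bound, obtained by splitting any acyclic subgraph of $G-(u,v)$ according to whether it contains $(v,u)$, shows that $\opt{\MAS}(G-(u,v))$ cannot exceed this value, so deleting $(v,u)$ achieves the maximum. The symmetric case is dual, and the ``neither reaches'' case makes both choices symmetric.

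For the restriction $\threeMAS\to\threeOMAS$, the construction is degree-preserving because it only deletes edges. Crucially, in a degree-$3$ graph each of $u$ and $v$ has exactly one incident edge outside of $\{(u,v),(v,u)\}$, so at least one of them has in-degree zero or out-degree zero in $G\setminus\{(u,v),(v,u)\}$; the potentially problematic ``both reach'' configuration is therefore impossible and the reachability rule is unambiguously decisive. The main obstacle I anticipate will be exactly this ``both reach'' case in the general $\MAS\to\OMAS$ reduction, where the plain reachability rule does not by itself distinguish the better deletion; handling it cleanly will require sharpening the rule with a polynomial-time local criterion that still certifies $\opt{\MAS}(G-e)=\opt{\MAS}(G)$, so that the inductive argument goes through for arbitrary directed inputs and not only for the degree-$3$ case that ultimately drives the hardness result of \cref{sec:hardnessmax}.
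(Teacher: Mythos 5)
There is a genuine gap, and it is exactly the one you flag yourself: the ``both reach'' case of an anti-parallel pair in the general $\MAS\to\OMAS$ reduction. Your scheme keeps one edge of each pair and uses the identity solution map $g$, which, as you correctly note, forces the requirement $\opt{\OMAS}(f(G))=\opt{\MAS}(G)$ exactly. But this is unforgiving: if your deletion rule picks the wrong edge of even a single pair, so that $\opt{\OMAS}(f(G))=\opt{\MAS}(G)-j$ with $j\geq 1$, then condition~(4) of the L-reduction fails for \emph{every} constant $\nu$ (take $y$ optimal for $f(G)$: the left side is $j>0$ while the right side is $0$). Deciding which of $\opt{\MAS}(G-(u,v))$ and $\opt{\MAS}(G-(v,u))$ attains the maximum when each of $u,v$ can reach the other in $G\setminus\{(u,v),(v,u)\}$ amounts to a global question about the structure of maximum acyclic subgraphs, and no polynomial-time local criterion is offered; the proof is therefore incomplete precisely where it matters. (Your observation that this bad case cannot arise in degree-$3$ graphs is correct, so your argument can be completed for the $\threeMAS\to\threeOMAS$ part, which is the part the paper actually uses downstream --- but the theorem as stated also claims the general reduction.)

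The paper's proof avoids this decision problem entirely by making $f$ delete \emph{both} edges of every anti-parallel pair (and all loops) and putting the intelligence into the solution map $g$ instead: given an acyclic subgraph $A$ of $f(G)$, it adds back one edge per pair, oriented consistently with the partial order induced by $A$, which keeps the result acyclic. With $k$ pairs this gives $m_{\MAS}(G,g(G,A))=m_{\OMAS}(f(G),A)+k$ and $\opt{\MAS}(G)=\opt{\OMAS}(f(G))+k$, so the two optima need not coincide --- only their \emph{gaps} to the returned solutions must match, and they do, yielding $\mu=\nu=1$. If you want to salvage your approach, the cleanest fix is to adopt this idea: stop trying to choose the right edge at construction time, and instead let the acyclic subgraph you are handed back tell you which orientation to restore.
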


\begin{proof}
The map $f$ transforming an instance of \MAS (a directed graph $\GCC$) to an instance of \OMAS (an oriented graph $f(\GCC)$) is given by removing from $\GCC$ all loops and all pairs of anti-parallel edges.
Furthermore, the map $g$ transforming a solution of \OMAS for the instance $f(\GCC)$ (an acyclic subgraph $\ACC$ of $f(\GCC)$) to a solution of \MAS for the instance $\GCC$ (an acyclic subgraph $\BCC = g(\GCC,\ACC)$ of $\GCC$) is given as follows: 
Extend the acyclic graph $\ACC$ to a subgraph $\BCC$ of $\GCC$ by adding for each anti-parallel pair of edges in $\GCC$ one edge whose orientation is consistent with the partial order induced by $\ACC$. By construction, the subgraph $\BCC$ is still acyclic.

Let $e$ be the number of edges in $\GCC$, let $k$ be the number of pairs of anti-parallel edges in $\GCC$, and let $a$ be the number of edges in $\ACC$. Then the number of edges in $\BCC$ is $a+k$. On the other hand, any acyclic subgraph $\CCC$ of $\GCC$ restricts to an acyclic subgraph of $f(\GCC)$ by removing at most $k$ edges.
Thus we have
\begin{align*}
m_\MAS(\GCC,g(\GCC,\ACC)) &= m_\OMAS(f(\GCC),\ACC) + k, \\
\opt\MAS(\GCC) &= \opt\OMAS(f(\GCC)) + k
\intertext{and thus $\opt\OMAS(f(\GCC)) \leq \opt\MAS(\GCC)$ and}
\opt\MAS(\GCC) - m_\MAS(\GCC,g(\GCC,\ACC)) &= \opt\OMAS(f(\GCC)) - m_\OMAS(f(\GCC),\ACC),
\end{align*}
establishing a strict reduction from \MAS to \OMAS.
The same construction restricts to a strict reduction from \threeMAS to \threeOMAS.
\end{proof}

We state a few known hardness of approximation results for \MAS and related problems.
\begin{thm}[Newman~\cite{New},~Theorem~3] \label{thm:3masnp} It is NP-hard to approximate \threeMAS to within $\big(1-\frac{1}{126}\big)+\epsilon$ for any $\epsilon>0$.
\end{thm}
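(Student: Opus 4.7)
The cited result is due to Newman, so my ``proof proposal'' is really a reconstruction of the standard route one would take to obtain this kind of hardness factor for \threeMAS. The general framework is a chain of approximation-preserving reductions starting from a canonical constraint satisfaction problem with a known PCP-based hardness gap, and ending at \threeMAS via carefully designed graph gadgets. A natural starting point would be Håstad's tight inapproximability result for \textsf{Max-E3LIN-2} (linear equations modulo $2$ with exactly three variables per equation), which yields a gap problem where it is NP-hard to distinguish instances with value at least $1-\epsilon$ from instances with value at most $\tfrac{1}{2}+\epsilon$.

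The plan is as follows. First I would reduce a constant-occurrence variant of the starting CSP (bounded-occurrence \textsf{Max-E3LIN-2} or \textsf{Max-E3SAT}) to a degree-bounded version of the standard \MAS problem. The usual mechanism for this is a vertex gadget that encodes a Boolean variable by a cycle/path whose optimal acyclic subgraph has two possible ``orientations,'' together with clause gadgets that enforce consistency with each satisfied constraint; each gadget contributes a constant number of edges, so both the size and the gap scale linearly. This gives an L-reduction from the bounded-occurrence CSP to \MAS. Second, to obtain the stronger \threeMAS conclusion, I would compose this with a degree-reduction gadget: replace each high-degree vertex $v$ by a small directed ``expander-like'' path or cycle whose vertices distribute the in- and out-edges of $v$ while forcing any near-optimal acyclic subgraph to behave consistently on all copies of $v$. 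Such a gadget can be made to introduce only a constant multiplicative loss in both the instance size and the gap, preserving the L-reduction property with explicit parameters $\mu$ and $\nu$.

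Finally, I would track the constants through the composition $\text{CSP} \to \MAS \to \threeMAS$ using \cref{thm:newfactor} to convert an $(1-\delta)$-approximation for \threeMAS into a $(1-\mu\nu\delta)$-approximation for the source CSP; demanding that $\mu\nu\delta < \epsilon_0$ for the known NP-hardness gap $\epsilon_0$ of the CSP yields the threshold $\delta = 1/126$ (up to additive $\epsilon$). The main obstacle, and where Newman's contribution really lies, is in the design of the degree-reduction gadget: one needs a gadget whose optimal acyclic subgraph value is a known constant and whose deviation from optimum in any near-optimal solution can be charged, edge-for-edge, to an inconsistency in the encoded assignment. Without such a tight gadget, the composition blows up the parameter $\mu\nu$ and one loses the factor $1/126$. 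For the purposes of the present paper I would simply invoke Newman's theorem as a black box, since the subsequent sections only use its statement, not its proof.
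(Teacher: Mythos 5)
This statement is imported verbatim from Newman~\cite{New} (her Theorem~3); the paper supplies no proof of it and uses it purely as a black box, feeding the constant $\frac{1}{126}$ into \cref{cor:hammer} and thence into the final L-reduction. Your closing remark---that one should simply invoke Newman's theorem, since only its statement is needed downstream---therefore coincides exactly with what the authors do, and on that point your proposal is correct.

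As a reconstruction of Newman's argument, however, the sketch has a genuine gap: the constant is asserted rather than derived. You describe a chain $\text{CSP} \to \MAS \to \threeMAS$ with unspecified variable, clause, and degree-reduction gadgets, and then state that requiring $\mu\nu\delta < \epsilon_0$ ``yields the threshold $\delta = 1/126$.'' Without the explicit gadgets there is no way to compute $\mu$ and $\nu$, so the specific number $126$ does not follow from anything you have written; moreover, a generic degree-reduction step (splitting a high-degree vertex into a directed path that distributes its arcs) is not automatically gap-preserving---the new gadget edges inflate $\mu$, and unless the gadget's local optimum is pinned down and every deficiency is charged back to the source instance, the composed parameters can degrade arbitrarily. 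You correctly identify this tight charging argument as the crux of Newman's contribution, but identifying where the difficulty lies is not the same as resolving it. Since the surrounding paper never needs the internals of the proof, the right course is the one you end on: cite the theorem and omit the speculative reconstruction.
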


Moreover, the following result establishes hardness with respect to the \emph{unique games conjecture} (UGC) \cite{Khot2002Power}.  
A problem is said to be \emph{UGC-hard} (or \emph{UG-hard}) if the unique games conjecture implies that the problem is NP-hard.
We refer to~\cite{Khot2010UGC} for a detailed account on this conjecture.

\begin{thm}[Guruswami et al.~\cite{Guru}, Theorem 1.1] \label{thm:masugc} Let $\delta \in (0, \frac{1}{2})$. If for any directed graph $G$ with an acyclic subgraph consisting of a fraction $(1-\delta)$ of its edges, one can efficiently find an acyclic subgraph of G with more than $(\frac{1}{2} + \delta)$ of its edges, then the UGC  is false.
In particular, it is UGC-hard to approximate \MAS
within a factor of $\frac{1}{2}+\delta$ for any~$\delta>0$.
\end{thm}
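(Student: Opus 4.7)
The plan is to prove this by a reduction from the Unique Games Conjecture, following the dictator-vs.-no-notable-coordinate paradigm. Fix a small $\eta>0$ to be chosen later. The UGC asserts that, for sufficiently large alphabet $[R]$, it is NP-hard to decide whether a given unique games instance $\Gamma=(V,E,\{\pi_e\}_{e\in E})$ is $(1-\eta)$-satisfiable or at most $\eta$-satisfiable. We build a directed graph $G_\Gamma$ such that the YES case admits an acyclic subgraph containing a $(1-\delta)$-fraction of the edges, while the NO case forces every acyclic subgraph to contain at most a $(\tfrac{1}{2}+\delta)$-fraction. The baseline $\tfrac{1}{2}$ comes from the fact that a uniformly random ordering of the vertices of any directed graph yields, in expectation, exactly half of the edges as an acyclic (forward) subgraph, so any algorithm beating $\tfrac{1}{2}+\delta$ would decode the UG instance.

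To construct $G_\Gamma$, attach to each vertex $v\in V$ a \emph{cloud} $C_v$ indexed by the long code alphabet $\{-1,+1\}^R$; an acyclic orientation on $C_v$ will be interpreted as a total order on $\{-1,+1\}^R$, and the projection of that order to a single coordinate yields a candidate UG label. For each edge $e=(u,v)\in E$ with bijection $\pi_e$, draw a suitable random set of directed edges between $C_u$ and $C_v$ obtained by sampling pairs $(x,y)\in\{-1,+1\}^R\times\{-1,+1\}^R$ from a noise distribution compatible with $\pi_e$. For \textbf{completeness}, given a UG assignment $\sigma:V\to[R]$ that satisfies a $(1-\eta)$-fraction of constraints, order each cloud $C_v$ by the $\sigma(v)$-th coordinate (the dictator ordering); for every satisfied constraint the induced ordering between $C_u$ and $C_v$ agrees with the underlying noise structure, so essentially all edges across satisfied constraints are forward. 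Choosing the noise parameter together with $\eta$ yields the required $(1-\delta)$-fraction.

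The \textbf{soundness} step is the main obstacle and must be carried out via an invariance-principle argument in the spirit of Mossel--O'Donnell--Oleszkiewicz. Suppose $H\subseteq G_\Gamma$ is acyclic and contains more than a $(\tfrac{1}{2}+\delta)$-fraction of the edges. The acyclicity of $H$ induces a partial order on each cloud $C_v$, which we encode as a $\{0,1\}$-valued function $f_v$ on $\{-1,+1\}^R\times\{-1,+1\}^R$. For each UG edge $e=(u,v)$, the advantage over $\tfrac{1}{2}$ is expressed as a correlation between $f_u$ and $f_v$ under the noise distribution attached to $\pi_e$. Summing over edges, the hypothesis forces the average advantage to be at least $\delta$. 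A standard invariance / noise-stability argument then shows that, for a non-negligible fraction of $v$, the function $f_v$ must have a coordinate of large \emph{low-degree influence}; otherwise the correlations would be no larger than in the Gaussian model, which is controlled by $o_R(1)$. Extracting such an influential coordinate at each such $v$ defines, via a randomized label-decoding, a partial UG assignment that satisfies more than an $\eta$-fraction of the constraints, contradicting the NO case.

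Choosing the parameters $R=R(\delta)$, $\eta=\eta(\delta)$, and the noise rate so that both the completeness bound $(1-\delta)$ and the soundness bound $(\tfrac{1}{2}+\delta)$ hold simultaneously completes the reduction, which is polynomial in $|V|\cdot 2^R$. Since distinguishing the YES and NO cases of \MAS is therefore at least as hard as the assumed-hard UG promise problem, the ``in particular'' conclusion follows: approximating \MAS within any constant factor strictly greater than $\tfrac{1}{2}$ is UGC-hard.
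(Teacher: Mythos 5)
First, note that the paper does not prove this statement at all: it is imported verbatim from Guruswami et al.~\cite{Guru} and used as a black box, so there is no internal proof to compare your attempt against. Your sketch is a fair summary of the strategy of that cited work (a long-code/dictatorship-test reduction from Unique Games, dictator orderings for completeness, influence-based decoding for soundness), and your final derivation of the ``in particular'' clause from the YES/NO gap is correct.

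As a proof, however, the sketch has a genuine gap exactly where the main technical contribution of \cite{Guru} lies: the soundness step. A ``standard invariance / noise-stability argument'' in the spirit of Mossel--O'Donnell--Oleszkiewicz does not apply off the shelf, because the objects being analyzed are not bounded low-degree functions into a fixed finite range but total orderings of a domain of size $2^R$ (equivalently, permutations), and the forward-edge payoff is not a fixed finite-arity predicate composed with such functions. Making this work required new machinery: a notion of influence for orderings, a coarsening of each ordering into bucket functions with a fixed finite range, and a quasi-randomness statement asserting that orderings with no influential coordinate cannot beat the random-ordering baseline of $\frac{1}{2}$ on the chosen edge distribution. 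Your completeness paragraph also glosses over the fact that ordering a cloud by a single $\pm 1$ coordinate yields only a two-block partial order with massive ties, so the tie-breaking must be specified and the resulting fraction of forward edges verified to be $1-\delta$ rather than some weaker constant. None of this invalidates the approach --- it is the approach of \cite{Guru} --- but as written your argument assumes precisely the lemma that makes the theorem hard.
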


 By \cref{thm:MAStoOMAS}, the same is true for \OMAS. Moreover, Newman~\cite{New} established an approximation preserving reduction from \MAS to \threeMAS, with the following consequence:

\begin{thm}[Newman~\cite{New}, Theorem 5]\label{thm:3masugc} For any constant $\epsilon>0$, if there exists a $\left(\left(1 - \frac{1}{18}\right)+\epsilon\right)$-approximation algorithm for \threeMAS, then there exists a $\left(\frac{1}{2}+\delta\right)$-approximation algorithm for \MAS for some constant $\delta>0$.
\end{thm}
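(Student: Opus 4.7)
The plan is to construct an L-reduction from \MAS to \threeMAS with parameters $\mu,\nu$ satisfying $\mu\nu\le 9$, and then invoke \cref{thm:newfactor}: a hypothetical $\bigl(1-\tfrac{1}{18}+\epsilon\bigr)$-approximation for \threeMAS becomes a $\bigl(1-\mu\nu(\tfrac{1}{18}-\epsilon)\bigr)\ge\bigl(\tfrac{1}{2}+\delta\bigr)$-approximation for \MAS for a suitable $\delta>0$.

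The map $f$ is the standard \emph{vertex-splitting} construction. For each vertex $v$ of the input directed graph $\GCC=(\VCC,\ECC)$ with in-degree $d^-_v$ and out-degree $d^+_v$, replace $v$ by a gadget consisting of a directed path $v^-_1\to\dots\to v^-_{d^-_v}\to v^+_1\to\dots\to v^+_{d^+_v}$; the in-edges of $v$ are attached, one each, to the input copies $v^-_i$, and the out-edges are attached, one each, to the output copies $v^+_j$. Every vertex of $f(\GCC)$ then has total degree at most~$3$, so $f(\GCC)$ is a valid \threeMAS instance with $3|\ECC|-|\VCC|$ edges. Any acyclic subgraph of $\GCC$ extends to an acyclic subgraph of $f(\GCC)$ by adjoining all gadget edges (the gadget paths refine any topological order of $\GCC$), so $\opt\threeMAS(f(\GCC))\ge\opt\MAS(\GCC)+(2|\ECC|-|\VCC|)$.

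The map $g$ first modifies a given acyclic $A\subseteq f(\GCC)$ into an acyclic $A'$ of the same (or larger) size that contains every gadget edge, and then returns the projection of its original edges onto $\GCC$ obtained by collapsing each gadget back to a single vertex. The key structural fact behind this exchange is that a gadget path runs from input copies to output copies only, so any cycle created by inserting a missing gadget edge must traverse at least one original out-edge and one original in-edge of the corresponding gadget; iterating local swaps (or invoking an extremal argument over the family of acyclic subgraphs that contain all gadget edges) then produces $A'$ with $|A'|\ge|A|$. The projection $g(\GCC,A)$ is acyclic because any hypothetical cycle in it would lift, via the complete gadget paths in $A'$, to a cycle in $A'$. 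Together with the trivial bound $\opt\MAS(\GCC)\ge|\ECC|/2$, this yields
\[
  \opt\threeMAS(f(\GCC))=\opt\MAS(\GCC)+(2|\ECC|-|\VCC|)\le 5\,\opt\MAS(\GCC),
\]
hence $\mu\le 5$; and the identity $m_\MAS(\GCC,g(\GCC,A))=|A'|-(2|\ECC|-|\VCC|)\ge |A|-(2|\ECC|-|\VCC|)$ gives
\[
\opt\MAS(\GCC)-m_\MAS(\GCC,g(\GCC,A))\le \opt\threeMAS(f(\GCC))-m_\threeMAS(f(\GCC),A),
\]
so $\nu\le 1$. Thus $\mu\nu\le 5\le 9$ and \cref{thm:newfactor} delivers the claimed implication.

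The principal obstacle is the exchange step itself: inserting a single missing gadget edge may a priori close several cycles simultaneously, so the existence of a polynomial-time, edge-count-preserving modification $A\to A'$ requires a careful local-swap procedure that singles out, for each missing gadget edge, an original edge whose removal suffices. This in turn relies crucially on the one-way structure of the path gadget and on the placement of all input copies before all output copies, which together force every re-entry into a gadget to occur at one of its input copies via an original in-edge and thereby cap the number of original edges affected by each swap.
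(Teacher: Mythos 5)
First, be aware that the paper does not prove this statement: it is quoted from Newman~\cite{New} with a citation, so there is no in-paper argument to compare against, and your proof has to stand entirely on its own. It does not. The step you yourself single out as ``the principal obstacle''---turning an arbitrary acyclic $A\subseteq f(\GCC)$ into an acyclic $A'$ with $|A'|\ge|A|$ that contains every gadget edge---is not merely delicate; it is false, and with it both the claimed equality $\opt\threeMAS(f(\GCC))=\opt\MAS(\GCC)+(2|\ECC|-|\VCC|)$ and the bound $\nu\le 1$ collapse. (The forward inequality $\opt\threeMAS(f(\GCC))\ge\opt\MAS(\GCC)+(2|\ECC|-|\VCC|)$ is fine; it is the reverse direction that fails.)

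Concretely, let $\GCC$ have vertices $v,u_1,u_2,w_1,w_2$ and edges $u_1\to v$, $u_2\to v$, $v\to w_1$, $v\to w_2$, $w_1\to u_1$, $w_2\to u_2$: two edge-disjoint directed triangles through $v$, so $\opt\MAS(\GCC)=4$ and $2|\ECC|-|\VCC|=7$. The gadget for $v$ in $f(\GCC)$ is the path $v^-_1\to v^-_2\to v^+_1\to v^+_2$, and every directed cycle of $f(\GCC)$ must cross from the input copies to the output copies of $v$, hence must traverse the single gadget edge $v^-_2\to v^+_1$. Deleting that one edge leaves an acyclic subgraph $A$ with $12$ of the $13$ edges, so $\opt\threeMAS(f(\GCC))=12>11=\opt\MAS(\GCC)+(2|\ECC|-|\VCC|)$; and since any acyclic subgraph containing all $7$ gadget edges has at most $7+\opt\MAS(\GCC)=11$ edges, no $A'$ of the kind you postulate exists, and your map $g$ does not even return a feasible solution on this optimal $A$ (its $6$ original edges project onto all of the cyclic graph $\GCC$). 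The underlying phenomenon is that one internal gadget edge can simultaneously break $\min(d^-_v,d^+_v)$ edge-disjoint cycles of $\GCC$, a quantity that is unbounded on general \MAS instances, so the na\"ive path-splitting gadget cannot yield an L-reduction with constant parameters. A sanity check already flags this: your constants $\mu\nu\le5$ would make it UGC-hard to approximate \threeMAS within $\tfrac{9}{10}+\epsilon$, substantially stronger than Newman's $\tfrac{17}{18}$. To actually prove the statement rather than cite it, you would need Newman's construction and accounting, which are designed precisely to control this loss and which are responsible for the factor $\tfrac{1}{18}$.
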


From \cref{thm:MAStoOMAS,thm:3masnp,thm:masugc,thm:3masugc}, we conclude:
\begin{cor} \label{cor:hammer}
It is UGC-hard to approximate \threeMAS and \threeOMAS within a factor of $\left(1 - \frac{1}{18}\right)+\epsilon$, and NP-hard to approximate \threeMAS and \threeOMAS to within $\left(1 - \frac{1}{126}\right)+\epsilon$, for any $\epsilon > 0$.
\end{cor}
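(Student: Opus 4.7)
The plan is to assemble the four cited theorems in sequence, using the strict L-reductions of \cref{thm:MAStoOMAS} as the glue between \MAS-type and \OMAS-type problems.

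For the NP-hardness statement, \cref{thm:3masnp} already asserts that \threeMAS is NP-hard to approximate within $(1-\tfrac{1}{126})+\epsilon$. To transfer this bound to \threeOMAS, I would apply the strict L-reduction from \threeMAS to \threeOMAS provided by \cref{thm:MAStoOMAS}. Since this reduction has $\mu = \nu = 1$, \cref{thm:newfactor} implies that any hypothetical $(1-\delta)$-approximation algorithm for \threeOMAS would yield a $(1-\delta)$-approximation algorithm for \threeMAS with the same $\delta$. Setting $\delta = \tfrac{1}{126} - \epsilon'$, we obtain the desired NP-hardness for \threeOMAS.

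For the UGC-hardness statement, I would argue by contradiction. Suppose there exists a $\bigl((1 - \tfrac{1}{18}) + \epsilon\bigr)$-approximation algorithm for \threeMAS. Then \cref{thm:3masugc} supplies a $(\tfrac{1}{2} + \delta)$-approximation algorithm for \MAS for some $\delta > 0$, which by \cref{thm:masugc} would refute the UGC. Hence \threeMAS is UGC-hard to approximate within $(1-\tfrac{1}{18})+\epsilon$. The claim for \threeOMAS again follows from the strict reduction in \cref{thm:MAStoOMAS} combined with \cref{thm:newfactor}, which transfers the inapproximability constant verbatim.

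The only real step beyond citation is the bookkeeping that the strict L-reduction preserves the approximation factor exactly: since $\mu\nu = 1$, the constants $\tfrac{1}{126}$ and $\tfrac{1}{18}$ propagate from \threeMAS to \threeOMAS with no loss. No further technical obstacle arises, as each of the required ingredients is already in place in the excerpt.
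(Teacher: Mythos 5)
Your proposal is correct and matches the paper's intent exactly: the paper gives no explicit proof, simply asserting that the corollary follows from \cref{thm:MAStoOMAS,thm:3masnp,thm:masugc,thm:3masugc}, and your assembly of those four results — with the strict reduction ($\mu\nu=1$) transferring the constants from \threeMAS to \threeOMAS without loss — is precisely the intended argument.
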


\section{Hardness of Approximation of Min-Morse Matching}
\label{sec:hardnessmin}

In this section, we work with abstract connected simplicial complexes. Recall that an abstract simplicial complex is connected if its $1$-skeleton is connected as a graph. 

\begin{defn}[Amplified complex] Given a pointed simplicial complex $\KCC$ with $n$ simplices and some integer $c>0$, the amplified complex $\widehat{\KCC}_{c}$ is defined as the wedge sum of $m$ copies of~$\KCC$, with $m = n^{c-1}$. 
\end{defn}

\begin{lem} \label{lem:blowup} Given a complex $\KCC$ of size $n$ and integer $c$, consider the amplified complex $\widehat{\KCC}_{c}$ of $\KCC$. Let $\widehat\VSS$ be a gradient vector field on $\widehat{\KCC}_{c}$. Then
\begin{enumerate}[(i)]
\item $\widehat{\KCC}_{c}$ is collapsible if and only if $\KCC$ is collapsible. 
\item If $\KCC$ is not collapsible, then $\widehat\VSS$ has more than $n^{c-1}$ critical simplices.
\end{enumerate} 
\end{lem}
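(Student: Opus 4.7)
The plan is to exploit the wedge-sum structure of $\widehat{\KCC}_{c}$: since the basepoint~$p$ is the only simplex shared among the $m = n^{c-1}$ copies of~$\KCC$, every simplex $\tau$ of positive dimension belongs to a unique copy~$\KCC_i$, and all its faces belong to the same copy. Consequently, every Hasse-diagram incidence lies inside a single copy, and any Morse matching $\widehat{\VSS}$ on $\widehat{\KCC}_{c}$ decomposes as a disjoint union $\widehat{\VSS} = \bigcup_{i=1}^{m} \VSS_i$, where $\VSS_i$ is the restriction to the Hasse diagram of~$\KCC_i$. Each $\VSS_i$ is automatically a Morse matching on $\KCC_i \cong \KCC$, since any directed cycle in its modified Hasse diagram would also be one in that of $\widehat{\KCC}_{c}$, contradicting the acyclicity of~$\widehat{\VSS}$.

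For the forward direction of~(i), if $\KCC$ is collapsible then \cref{cor:unicollapse} supplies a gradient on $\KCC$ whose unique critical simplex is the basepoint~$p$. Placing a copy of it on each $\KCC_i$ produces a gradient on $\widehat{\KCC}_{c}$ whose only critical simplex is the shared basepoint, witnessing collapsibility of $\widehat{\KCC}_{c}$. The backward direction of~(i) then follows as the contrapositive of~(ii).

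To prove~(ii), I would first observe that non-collapsibility of $\KCC$ forces every gradient on~$\KCC$ to have at least two critical simplices: the Morse inequality $m_0 \geq b_0 = 1$ (since $\KCC$ is connected) guarantees at least one critical vertex, and were there exactly one critical simplex overall it would have to be this vertex, so that \cref{Gradient Collapsing Theorem} applied with $\LCC = \{p\}$ would collapse $\KCC$ to a point. Hence each $\VSS_i$ has at least two critical simplices. I would then case-split on the basepoint in $\widehat{\VSS}$: if $p$ is matched by an edge lying in some copy $\KCC_{i^*}$, then $p$ is matched in $\VSS_{i^*}$ and critical in every other $\VSS_i$, so the total critical-simplex count of $\widehat{\VSS}$ (obtained by identifying the $m-1$ redundant copies of~$p$) is at least $2 + (m-1) = m+1$; if instead $p$ is unmatched in $\widehat{\VSS}$, the same identification yields at least $1 + m = m+1$. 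Either way, $\widehat{\VSS}$ has strictly more than $n^{c-1}$ critical simplices, as desired.

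The main bookkeeping subtlety is the basepoint, the sole simplex genuinely shared between copies, whose matching status in $\widehat{\VSS}$ governs the relationship between the per-copy critical counts and the global count. The remainder reduces to two routine ingredients: the wedge-sum decomposition of matchings, and the equivalence ``$\KCC$ is collapsible if and only if some gradient on~$\KCC$ has a single critical simplex''.
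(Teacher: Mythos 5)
Your proof is correct and follows essentially the same strategy as the paper's: decompose the Morse matching over the wedge summands and pigeonhole on the basepoint, using the fact that a non-collapsible summand must contribute at least one critical simplex beyond the shared vertex. The only differences are organizational — you obtain the backward direction of (i) as the contrapositive of (ii), and in (ii) you replace the paper's normalization via \cref{lem:uniqueCriticalVertex} (reducing to a single critical $0$-simplex) with a direct case analysis on whether the basepoint is matched, which works equally well.
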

\begin{pf} Suppose that the complex $\KCC$ is collapsible. Then there exists a gradient vector field $\VSS_\KCC$ with a unique critical simplex $q\in\KCC$. Let $p$ be an arbitrarily chosen distinguished vertex of $\KCC$ that will be used to construct amplified complex $\widehat{\KCC}_{c}$. Using \cref{cor:unicollapse}, without loss of generality the vector field $\VSS_\KCC$ has $p$ as its unique critical simplex. Now the gradient vector field on $\widehat{\KCC}_{c}$, say $\widehat\VSS$, is simply the gradient vector field $\VSS_\KCC$ repeated on each identical copy of $\KCC$. Since $p$ is the unique critical simplex of $\widehat\VSS$ on $\widehat{\KCC}_{c}$, we conclude that $\widehat{\KCC}_{c}$ is collapsible. 

Conversely, suppose that the complex $\widehat{\KCC}_{c}$ is collapsible. Then, by \cref{cor:unicollapse}, we can obtain a gradient vector field $\widehat{\VSS}$ on $\widehat{\KCC}_{c}$ with the distinguished vertex $p$ as its unique critical simplex. If we consider the gradient vector field $\widehat{\VSS}$ restricted to any one of the copies of $\widehat{\KCC}_{c}$, it follows immediately that $\KCC$ is collapsible.

Now suppose that $\KCC$ is not collapsible and $\widehat\VSS$ has less than or equal to $n^{c-1}$ critical simplices. By \cref{lem:uniqueCriticalVertex}, without loss of generality we may assume that $\widehat\VSS$ has the distinguished vertex $p$ as the unique critical $0$-simplex. Now consider $\widehat\VSS$ restricted to each of the individual copies of $\KCC$. Then clearly at least one of the copies has $p$ as its unique critical simplex (else we would have more than $n^{c-1}$ critical simplices in total). But this immediately implies that $\KCC$ is collapsible, a contradiction. Hence, if $\KCC$ is not collapsible, then  $\widehat\VSS$ has more than $n^{c-1}$ critical simplices.
\end{pf}

\begin{prop} \label{prop:magicalgorithm} For any $\epsilon \in (0,1]$, if there exists an $O(n^{1-\epsilon})$-factor approximation algorithm for \MinMM, where $n$ denotes the number of simplices of an input simplicial complex, then there exists a polynomial time algorithm for deciding collapsibility of simplicial complexes.
\end{prop}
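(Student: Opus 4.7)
The plan is to exploit the amplified-complex construction to blow up the gap between collapsible and non-collapsible inputs until it exceeds any $O(n^{1-\epsilon})$ approximation factor, forcing a hypothetical approximate solver for \MinMM to distinguish the two cases exactly.

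First, given a connected input complex $\KCC$ with $n$ simplices and a distinguished vertex $p$, I would construct the amplified complex $\widehat{\KCC}_c$ for a constant $c$ to be chosen in terms of $\epsilon$. For fixed $c$ this is a polynomial-time construction and the resulting complex has $N \leq n^c$ simplices, since it is a wedge of $n^{c-1}$ copies of $\KCC$ glued at $p$.

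The key dichotomy comes from combining \cref{lem:blowup} with \cref{cor:unicollapse}: if $\KCC$ is collapsible then so is $\widehat{\KCC}_c$ and hence $\opt\MinMM(\widehat{\KCC}_c) = 1$, whereas if $\KCC$ is not collapsible then every gradient on $\widehat{\KCC}_c$ has more than $n^{c-1}$ critical simplices and so $\opt\MinMM(\widehat{\KCC}_c) > n^{c-1}$. Running the hypothesized $O(N^{1-\epsilon})$-approximation algorithm on $\widehat{\KCC}_c$ therefore yields an output $V$ satisfying $V \leq C_0\, N^{1-\epsilon} \leq C_0\, n^{c(1-\epsilon)}$ in the collapsible case and $V > n^{c-1}$ in the non-collapsible case, where $C_0$ denotes the hidden constant in the $O(\cdot)$-notation.

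The only delicate point is choosing $c$ so that these two ranges are separated. Since $c(1-\epsilon) < c-1$ is equivalent to $c\epsilon > 1$, any integer $c \geq \lfloor 1/\epsilon \rfloor + 1$ yields $C_0\, n^{c(1-\epsilon)} < n^{c-1}$ for all $n$ beyond a constant threshold $n_0 = n_0(\epsilon, C_0)$. The decision procedure then compares $V$ against $n^{c-1}$, reporting \emph{collapsible} below this threshold and \emph{not collapsible} above; the finitely many small inputs with $n \leq n_0$ are decided by brute force. The main obstacle is precisely this parameter balancing: the exponent $c$ must be large enough that the multiplicative gap $n^{c-1}$ in the optimum dominates the approximation slack $N^{1-\epsilon}$, yet the amplified complex must remain of polynomial size in $n$. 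Once $c$ is fixed as above, the remaining argument reduces to routine bookkeeping on top of \cref{lem:blowup} and \cref{cor:unicollapse}.
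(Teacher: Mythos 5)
Your proposal is correct and follows essentially the same route as the paper's proof: amplify the input to a wedge of $n^{c-1}$ copies, use \cref{lem:blowup} to get the dichotomy between an optimum of $1$ (collapsible) and more than $n^{c-1}$ critical simplices (non-collapsible), choose $c$ with $c\epsilon > 1$ so that the approximation slack $C_0 n^{c(1-\epsilon)}$ falls below the threshold $n^{c-1}$ for all sufficiently large $n$, and decide the finitely many small instances by brute force. The only cosmetic difference is that you bound the size of the amplified complex by $N \leq n^c$ rather than computing it exactly, which changes nothing.
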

\begin{pf} Given any $\epsilon \in (0,1]$, suppose there exists an $O(n^{1-\epsilon})$-factor approximation algorithm for \MinMM. Specifically, there exist $p, M>0$ such that for all $n \geq M$, the approximation ratio is bounded above by $p n^{1-\epsilon}$.  
Now choose $c$ to be the smallest positive integer with the property $\frac{1}{c} < \epsilon$, i.e., $c = \left\lfloor \frac{1}{\epsilon} + 1 \right\rfloor$. Consider an arbitrary connected complex $\KCC$ with $n$ simplices and construct the amplified complex $\widehat{\KCC}_{c}$. Note that the total number of simplices in $\widehat{\KCC}_{c}$ is $\hat n = \left( n - 1 \right) n^{c-1} + 1 = n^c - n^{c-1} + 1$.
Also, if  $n < \max\{M,p^{\frac{1}{1+c\epsilon}}\}$, i.e., if $n$ is bounded by a constant, the collapsibility of $\KCC$ can easily be checked in constant time. So, without loss of generality, we assume that $n \geq \max\{M,p^{\frac{1}{1+c\epsilon}}\}$.

We now use the following Algorithm~B to decide collapsibility of the complex $\KCC$. We execute the $O(n^{1-\epsilon})$-factor approximation Algorithm~A for \MinMM on the amplified complex $\widehat{\KCC}_{c}$. If the number $C_A$ of critical simplices returned by Algorithm~A is less than $n^{c-1}$, we report that the complex $\KCC$ is collapsible, else we declare that $\KCC$ is not collapsible.

When the complex is collapsible, the number $C_A$ of critical simplices returned by Algorithm~A can be bounded as follows:
\[
C_A  \leq   p \hat n^{1-\epsilon} <  p\left(n^{c}\right)^{1-\epsilon} \leq  n^{c-1} .
\]
The bound $C_A < n^{c-1}$ for a collapsible complex $\KCC$ with $n$ simplices, along with part (ii) of Lemma~\ref{lem:blowup}, establishes the correctness of Algorithm~B for determining collapsibility of the complex $\KCC$. Also, since $\hat n <  n^{c}$, Algorithm~B runs in time polynomial in $n$.
\end{pf} 

Recently, Tancer~\cite{Tan16} proved the following theorem about
collapsibility of $3$-complexes.

\begin{thm}[Tancer~\cite{Tan16}, Theorem 1] \label{thm:tancer} It is NP-complete to decide whether a given $3$-complex is collapsible.
\end{thm}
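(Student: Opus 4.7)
The plan is to reduce from \textsf{3-SAT}. Membership in NP is immediate: a certificate is a sequence of at most $|K^{(2)}|+|K^{(3)}|$ elementary collapses, and each step is verifiable in polynomial time by checking that the indicated face has exactly one coface in the current sub-complex.

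For NP-hardness, given a 3-CNF formula $\varphi$ with variables $x_1,\dots,x_n$ and clauses $C_1,\dots,C_m$, I would construct a 3-complex $K_\varphi$ assembled from three kinds of gadgets glued along shared simplices. A \emph{variable gadget} $V_i$ is designed to admit exactly two distinct initial free faces, whose collapses initiate two mutually exclusive collapsing sequences, representing $x_i=\top$ and $x_i=\bot$; committing to one irreversibly blocks the other. A \emph{clause gadget} $G_j$ has three ``entrance channels'', one per literal, and collapses completely if and only if at least one of its channels has been opened. Finally, \emph{wiring simplices} connect the literal sides of each $V_i$ to the entrance channels of the clause gadgets that contain those literals, so that setting $x_i=\top$ opens the channels for positive occurrences and the $\bot$ side opens channels for negative occurrences.

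The engineering ingredient is the classical phenomenon of contractible but non-collapsible 2-complexes, with Bing's house with two rooms as prototype. By embedding Bing-house-like sub-complexes as spines inside 3-balls, one obtains 3-complexes that remain collapsible but only along a very restricted set of collapse orders. This is what makes a commitment to one branch of a variable gadget permanent, and what forces the entire complex to be collapsible precisely when every clause is activated.

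The main obstacle is proving soundness in both directions. The forward direction (satisfying assignment $\Rightarrow$ collapsibility) amounts to exhibiting an explicit schedule and is essentially a routine consequence of the gadget design. The harder direction (collapsibility $\Rightarrow$ satisfiability) requires ruling out ``cheating'' collapse orders that exploit unintended interactions between gadgets; for instance, one must forbid collapses that simultaneously half-open both sides of a variable gadget, or that propagate through a clause gadget without any literal being set. The bulk of the technical work is a careful case analysis of which faces are free at each stage, demonstrating that every successful collapsing sequence induces a well-defined assignment that satisfies $\varphi$ --- and this is precisely the technical heart of Tancer's argument.
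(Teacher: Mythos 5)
This statement is not proved in the paper at all: it is Theorem~1 of Tancer's paper, imported verbatim and used as a black box (together with \cref{prop:magicalgorithm}) to derive the inapproximability of \MinMM. So there is no in-paper argument to compare yours against; the only question is whether your proposal stands on its own. It does not. Your NP-membership argument is fine, modulo a harmless slip: a collapsing sequence also contains elementary collapses of types $(0,1)$ and $(1,2)$, so the certificate length is bounded by half the total number of simplices, not by $|K^{(2)}|+|K^{(3)}|$. And your architecture --- reduction from 3-SAT, variable gadgets with two mutually exclusive collapse branches, clause gadgets opened by literal channels, Bing's-house-like pieces to enforce irreversibility --- does match the general shape of Tancer's actual construction. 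But everything that makes the theorem true is missing.

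Concretely: (1) you never exhibit the gadgets, so there is nothing that can be verified; (2) the property you need from a variable gadget --- that \emph{every} collapsing sequence of the glued global complex commits to exactly one branch, and that no sequence can half-open both or sneak through a clause gadget without any literal being set --- is exactly the kind of statement that fails for naive designs, because collapsibility quantifies existentially over all orderings of elementary collapses, and gluing gadgets together routinely creates unintended free faces; (3) you concede this yourself by writing that the required case analysis ``is precisely the technical heart of Tancer's argument.'' A proof that defers its technical heart to the very result it is trying to establish is not a proof. To turn this into one, you would need to write the gadgets down explicitly (Tancer's are built from Bing's houses with carefully chosen free edges, in the spirit of the conditional-collapsibility gadgets of Malgouyres and Franc\'es), prove a local lemma characterizing exactly which collapses of each gadget are possible given which of its interface edges have been freed, and then prove the soundness direction by induction along an arbitrary collapsing sequence. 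As it stands, this is a plausible research plan, not a proof.
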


\begin{cor}  For any $\epsilon \in (0,1]$, there exists no $O(n^{1-\epsilon})$-factor approximation algorithm for \MinMM, where $n$ denotes the number of simplices of an input simplicial complex, unless $P = \mathit{NP}$.
\end{cor}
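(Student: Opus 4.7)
The plan is to argue by contrapositive, directly chaining the two major ingredients already established: \cref{prop:magicalgorithm} (which converts an $O(n^{1-\epsilon})$-approximation for \MinMM into a polynomial-time collapsibility decider) and \cref{thm:tancer} (Tancer's NP-hardness of collapsibility for $3$-complexes). Since the two statements plug into each other almost verbatim, the proof should be essentially a one-line deduction, and the main work is just checking compatibility of the hypotheses.

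First, I would assume for contradiction that some $\epsilon \in (0,1]$ admits an $O(n^{1-\epsilon})$-factor approximation algorithm for \MinMM on arbitrary simplicial complexes. In particular, this algorithm applies to the $3$-dimensional instances produced by Tancer's reduction. I then invoke \cref{prop:magicalgorithm} to obtain a polynomial-time algorithm that decides whether a given simplicial complex is collapsible. Combined with \cref{thm:tancer}, this yields a polynomial-time algorithm deciding an NP-complete problem, forcing $P = \mathit{NP}$, which is the desired contradiction.

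The only subtlety worth verifying is that the collapsibility decider produced by \cref{prop:magicalgorithm} indeed operates on the class of instances used in \cref{thm:tancer}, namely $3$-dimensional simplicial complexes. This is not an issue because the amplification construction $\widehat{\KCC}_{c}$ is the wedge of copies of $\KCC$ along a vertex; wedging preserves the dimension of the complex, so $\widehat{\KCC}_{c}$ is a $3$-complex whenever $\KCC$ is. Moreover, the number of simplices $\hat n = n^{c} - n^{c-1} + 1$ is polynomial in $n$, so the overall decision procedure runs in polynomial time on the original input~$\KCC$. Hence the two theorems compose without any loss.

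There is no real obstacle here; the corollary is a direct consequence of \cref{prop:magicalgorithm,thm:tancer}, and the task reduces to stating this implication cleanly. If anything, the only thing to be a bit careful about is the quantifier order: the hypothetical approximation algorithm must work for \emph{every} input dimension (in particular $d = 3$), which is automatic since \MinMM is defined for all simplicial complexes, so the reduction goes through uniformly in~$\epsilon$.
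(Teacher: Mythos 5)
Your proposal is correct and matches the paper's (implicit) argument exactly: the corollary is stated without proof precisely because it is the direct composition of \cref{prop:magicalgorithm} and \cref{thm:tancer} that you describe. Your additional check that the wedge-sum amplification preserves dimension, so the decider applies to Tancer's $3$-dimensional instances, is a sensible verification but does not change the route.
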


\section{Hardness of Approximation for Max-Morse Matching}
\label{sec:hardnessmax}

In this section, we describe an L-reduction from \threeOMAS to \MaxMM, establishing hardness of approximation for \MaxMM. Our construction is based on a modification of Zeeman's \emph{dunce hat}~\cite{Zeeman}. The dunce hat is a simplicial complex which is contractible but has no free faces and is therefore not collapsible. 
In contrast, the \emph{modified dunce hat} is collapsible but only through a single free face. 
The triangulation is given in \cref{fig:cellAttach}. An equivalent triangulation has been described by Hachimori \cite[p.~108]{Hachimori2000Combinatorics}. Its number of simplices is minimal among all complexes that are collapsible through a single free face, as can be verified by an exhaustive search~\cite{Patak2018Personal}. The 1-simplex $\omega$ is the unique free face of the modified dunce hat $\DCC$.

\begin{figure}[h]
\begin{center}

\definecolor{verylightgray}{rgb}{0.95,0.95,0.95}
\definecolor{verylightblue}{rgb}{0.925,0.95,1.0}

\scriptsize
\begin{tikzpicture}[scale=0.6,baseline]%
    \coordinate (r) at (0,-1);
    \coordinate (w) at (-1,0);
    \coordinate (t) at (0,1);
    \coordinate (v) at (1,0);
    \coordinate (s) at (2,-2);
    \coordinate (u) at (2,2);
    \coordinate (sb) at (-2,2);
    \coordinate (q) at (-2,0);
    \coordinate (ur) at (-2,-2);
    \coordinate (ql) at (2,0);
    \coordinate (qr) at (0,-2);
     
    \draw[fill=verylightgray] (u)--(s)--(ur)--(sb)--cycle;
    \draw[fill=verylightblue] (u)--(sb)--(t)--cycle;
    \draw (r)--(v);
    \draw[Red,thick] (sb)--(u);
    \draw (v)--(u);
    \draw[Green,thick] (t)--(sb);
    \draw[Red,thick] (t)--(v);
    \draw[Red,thick] (t)--(w);
    \draw (w)--(q);
    \draw (w)--(ur);
    \draw (w)--(sb);
    \draw (r)--(ur);
    \draw (r)--(qr);
    \draw (s)--(r);
    \draw (s)--(v);
    \draw (ql)--(v);
    \draw (u)--(t);
    \draw (r)--(t);
    \draw (r)--(w);
    
    \node[left] at (r) {$r$};
    \node[right] at (w) {$w$};
    \node[above] at (t) {$t$};
    \node[left] at (v) {$v$};
    \node[below right] at (s) {$s$};
    \node[above right] at (u) {$u$};
    \node[above left] at (sb) {$s$};
    \node[left] at (q) {$q$};
    \node[below left] at (ur) {$u$};
    \node[right] at (ql) {$q$};
    \node[below] at (qr) {$q$};
    \node[above,Red] at ($(u)!1/2!(sb)$) {$\omega$};
    \node[below left=-1mm and 0mm,Green] at ($(t)!1/2!(sb)$) {$\eta$};
    \node[above right=-1mm and -1mm,Red] at ($(t)!1/2!(v)$) {$\phi$};
    \node[above left=-1mm and -1mm,Red] at ($(t)!1/2!(w)$) {$\psi$};
    \node[right=1mm,RoyalBlue] at ($(u)!1/2!(sb)!1/3!(t)$) {$\Gamma$};

\end{tikzpicture}
\hfil
\begin{tikzpicture}[scale=0.6,baseline]%
    \coordinate (r) at (0,-1);
    \coordinate (w) at (-1,0);
    \coordinate (t) at (0,1);
    \coordinate (v) at (1,0);
    \coordinate (s) at (2,-2);
    \coordinate (u) at (2,2);
    \coordinate (sb) at (-2,2);
    \coordinate (q) at (-2,0);
    \coordinate (ur) at (-2,-2);
    \coordinate (ql) at (2,0);
    \coordinate (qr) at (0,-2);

    \draw[fill=verylightgray] (u)--(s)--(ur)--(sb)--cycle;
    \draw (r)--(v);
    \draw (sb)--(u);
    \draw (v)--(u);
    \draw[Red,thick] (t)--(sb);
    \draw (t)--(v);
    \draw (t)--(w);
    \draw (w)--(q);
    \draw (w)--(ur);
    \draw (w)--(sb);
    \draw (r)--(ur);
    \draw (r)--(qr);
    \draw (s)--(r);
    \draw (s)--(v);
    \draw (ql)--(v);
    \draw (u)--(t);
    \draw (r)--(t);
    \draw (r)--(w);
    
    \draw[very thick,->,CornflowerBlue] ($(u)!1/2!(sb)$)--($(u)!1/2!(sb)!1/3!(t)$);
    \draw[thick,->] ($(u)!1/2!(t)$)--($(u)!1/2!(t)!1/3!(ql)$);
    \draw[thick,->] ($(u)!1/2!(v)$)--($(u)!1/2!(v)!1/3!(ql)$);
    \draw[thick,->] ($(s)!1/2!(v)$)--($(s)!1/2!(v)!1/3!(r)$);
    \draw[thick,->] ($(r)!1/2!(v)$)--($(r)!1/2!(v)!1/3!(t)$);
    \draw[thick,->] ($(v)!1/2!(ql)$)--($(v)!1/2!(ql)!1/3!(s)$);
    \draw[thick,->] ($(r)!1/2!(s)$)--($(r)!1/2!(s)!1/3!(qr)$);
    \draw[thick,->] ($(r)!1/2!(t)$)--($(r)!1/2!(t)!1/3!(w)$);
    \draw[thick,->] ($(r)!1/2!(qr)$)--($(r)!1/2!(qr)!1/3!(ur)$);
    \draw[thick,->] ($(r)!1/2!(ur)$)--($(r)!1/2!(ur)!1/3!(w)$);
    \draw[thick,->] ($(w)!1/2!(ur)$)--($(w)!1/2!(ur)!1/3!(q)$);
    \draw[thick,->] ($(w)!1/2!(q)$)--($(w)!1/2!(q)!1/3!(sb)$);
    \draw[thick,->] ($(sb)!1/2!(w)$)--($(sb)!1/2!(w)!1/3!(t)$);
    \draw[thick,->] (u)--($(u)!1/2!(ql)$);
    \draw[thick,->] (ur)--($(ur)!1/2!(qr)$);
    \draw[thick,->] (ur)--($(ur)!1/2!(q)$);
    \draw[thick,->] (ql)--($(ql)!1/2!(s)$);
    \draw[thick,->] (qr)--($(qr)!1/2!(s)$);
    \draw[thick,->] (q)--($(q)!1/2!(sb)$);
    \draw[thick,->] (r)--($(r)!1/2!(w)$);
    \draw[very thick,->,CornflowerBlue] (v)--($(v)!1/2!(t)$);
    \draw[very thick,->,CornflowerBlue] (w)--($(w)!1/2!(t)$);

\end{tikzpicture}
\caption{The modified dunce hat $\DCC$. Left: triangulation, with certain distinguished simplices highlighted. 
Note that the vertices $s,q,u$ appearing multiple times are identified accordingly.
The complex is collapsible through the unique free face $\omega$.
Right: a discrete gradient $\VSS_\DCC$ on $\DCC$ that leaves only the vertices $s,t$ and the edge $\eta$ critical. The three highlighted arrows (blue, thick) correspond to pairs in $\VSS_\DCC$ that will in some cases be discarded when assembling a gradient on the entire complex $\KG$.}
\label{fig:cellAttach}
\end{center}
\end{figure}
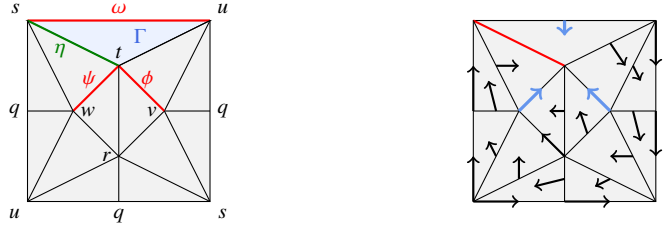

For the remainder of the section, we will use the following notation: For a graph $\GCC=(\VCC,\ECC)$, the indegree of a vertex $v$, in $\GCC$, is denoted $\inDG{v}$, and its outdegree is denoted $\outDG{v}$. 
We now construct a complex $\KG$ from $\GCC$, and more generally, a complex $\KGH$ for any subgraph $\HCC = \left(\vertH,\edgeH\right)$ of $\GCC$.
Throughout this section, the graph $\GCC$ denotes an oriented connected degree-3 graph. Note that the connectedness assumption is not a restriction for the \threeOMAS problem.

In order to aid the reader's intuition, we first outline the motivation behind some of the design choices for the gadget, before giving the formal description. 
Our aim is to construct a complex  $\KG$ so that there is a one-to-one correspondence between the cardinality of a (minimum) feedback arc set~(\cref{prop:erkprop}) and the (minimum) number of simplices that need to be removed from $\KG$ to make it erasable. 
To this end, we start with a disjoint union of copies of $\DCC$, one for each edge $e\in\edgeH$, and make identifications of vertices to obtain a complex $\KG$ which is homotopy equivalent to the undirected graph underlying $\GCC$.
However, we make additional identifications of edges to ensure that any subcomplex $\KGH$, corresponding to a subgraph~$\HCC$, collapses to a 1-dimensional complex isomorphic to the undirected graph underlying $\HCC$ if and only if $\HCC$ is acyclic.
In order to obtain an L-reduction, we ensure that the size of the complex $\KG$ is linear in size of graph $\GCC$.

We now describe the construction of the complexes $\KGH$ and $\KG$ for an oriented degree-3 graph $\GCC$, by classifying the vertices of $\HCC$ into five types.

\begin{enumerate}
\item Consider an arbitrary
total order $\prec$ on the edge set $\ECC$ of $\GCC$.
\item Start with a disjoint union of copies of $\DCC$, one for each edge $e\in\edgeH$, denoted by $\DCC_e$.
\item Using the pasting map construction as defined in \cref{sub:Simplicial complexes}, construct the complex $\KGH$ by identifying some of the distinguished simplices of each gadget
$\DCC_{e}$ based on the following rules, as applied to each vertex $v\in \VCC_\HCC$ according to its indegree and outdegree:

\begin{enumerate}

\item For every vertex $v\in\vertH$
with $\inDH{v}=\outDH{v}=0$, no identifications are made.

\item For every vertex $v\in\vertH$
with $\inDH{v}=0$ and $\outDH{v}>0$, identify all 0-simplices $s_{e}$ for every
outgoing edge $e \in \edgeH$. 

\item For every vertex $v\in\vertH$
with $\outDH{v}=0$ and $\inDH{v}>0$, identify all 0-simplices $t_{e}$ for
every incoming edge $e \in \edgeH$. 

\item For every vertex $v\in\vertH$
with $\inDG{v}=1$, $\outDG{v}=2$, $\inDH{v}>0$ and $\outDH{v}>0$,
let $k$ and $l$ denote the outgoing edges of $v$ in $\GCC$, with $k \prec l$, and let $j$
denote the incoming edge of $v$ in $\GCC$.
\begin{enumerate}
\item 
If $j,k \in \edgeH$, 
identify the 1-simplices $\phi_{j}\sim\omega_{k}$ such that the incident 0-simplices are identified as
$u_{k} \sim v_{j}$ and $s_{k} \sim t_{j}$. 
\item Similarly, if $j,l \in \edgeH$, identify $\psi_{j}\sim\omega_{l}$ such that $u_{l}\sim w_{j}$
and $s_{l}\sim t_{j}$.
\end{enumerate}

\item For every vertex $v\in\vertH$
with $\inDG{v} \in \{1,2\}$, $\outDG{v}=1$, $\inDH{v}>0$ and $\outDH{v}>0$,
let $k$ (and possibly $l$, with $k \prec l$) denote the incoming edges of $v$ in $\GCC$,
and let $j$ denote the outgoing edge of $v$ in $\GCC$.
\begin{enumerate}
\item 
If $j,k \in \edgeH$, 
identify the 1-simplices $\omega_{j}\sim\phi_{k}$ such that $u_{j}\sim v_{k}$
and $s_{j}\sim t_{k}$.
\item (Similarly, 
if $j,l \in \edgeH$, identify $\omega_{j}\sim\phi_{l}$ such that $u_{j}\sim v_{l}$
and $s_{j}\sim t_{l}$.)
\end{enumerate}

\end{enumerate}
\item Furthermore, we define $\KCC(\GCC) = \KCC(\GCC,\GCC)$.
\end{enumerate}

\begin{rmk}
We choose an arbitrary linear order since there is no natural choice to determine some of the attachments in the construction. Using an (arbitrarily chosen) linear order on the edge set $\ECC$ of  $\GCC$ allows us to make the construction of complexes $\KGH$ explicit and concrete. While it is clear that different linear orders on the edge set $\ECC$ may result in different complexes, the hardness results in this section do not depend on the choice of the linear order $\prec$. 
\end{rmk}

\begin{rmk}
Since $\GCC$ is an oriented graph, it is easy to verify that $\KGH$ is a simplicial complex. Also, by construction, $\KGH$ is a subcomplex of $\KG$ whenever $\HCC$ is a subgraph of $\GCC$.
\end{rmk}

\begin{rmk} \label{rem:twoincident} From~\cref{fig:cellAttach}, we observe that for a modified dunce hat $\DCC_{e}$, $\psi_{e}$ is incident on exactly two 2-simplices of $\DCC_{e}$. The same holds true for $\phi_{e}$. Also, note that $\omega_{e}$ is incident on a unique 2-simplex of $\DCC_{e}$, namely $\Gamma_{e}$.
\end{rmk}

\subsection{Structural properties of the reduction}

\begin{lem} \label{lem:erasefree} For a subgraph $\HCC = \left(\vertH,\edgeH\right)$ of a directed degree-3 graph $\GCC$ and an edge $e\in\edgeH$:
\begin{enumerate}[(i)]
\item If $\omega_e$ is eventually free in $\KGH$, then $\DCC_e$ is erasable in  $\KGH$. 
\item If $\DCC_e$ is erasable in $\KGH$ through a gradient $\VSS$, then $\left( \omega_e, \Gamma_e  \right )$ is a gradient pair in $\VSS$. 
If $f$ is a discrete Morse function with gradient $\VSS$, then for any simplex $\sigma \in \DCC_e$ such that $\sigma \notin \{ \omega_e, \Gamma_e \}$ we have $f(\omega_e) > f(\sigma)$.  
\end{enumerate}        
\end{lem}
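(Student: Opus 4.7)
The plan is to handle the two parts separately: part (ii) is a structural argument about any discrete Morse function witnessing erasability, while part (i) is a construction that turns the eventual-freeness hypothesis into an explicit erasing gradient.

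For part (ii), I would pick out the 2-simplex $\tau^* \in \DCC_e$ on which the Morse function $f$ attains its maximum over $\twocomplex{\DCC_e}$. Since $\DCC_e$ is erased, every 2-simplex of $\DCC_e$ is among the collapsed simplices $\KGH\setminus\MCC\subseteq\DCC_e$, so $\tau^*$ is paired in $\VSS$ with some facet $\sigma^*\in\DCC_e$. The central step is to show that $\sigma^*$ has no 2-coface in $\DCC_e$ other than $\tau^*$: if a second 2-coface $\tau'\in\DCC_e$ existed, monotonicity would yield $f(\sigma^*)\le f(\tau')\le f(\tau^*)$ and the pair condition gives $f(\sigma^*)=f(\tau^*)$, forcing the three distinct simplices $\sigma^*,\tau^*,\tau'$ all to take the value $f(\tau^*)$ and violating $|f^{-1}(t)|\le 2$. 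By \cref{rem:twoincident}, only $\omega_e$ is incident to a unique 2-simplex in $\DCC_e$, so $\sigma^*=\omega_e$ and $\tau^*=\Gamma_e$, and therefore $(\omega_e,\Gamma_e)\in\VSS$. The strict inequality $f(\omega_e)>f(\sigma)$ for $\sigma\in\DCC_e\setminus\{\omega_e,\Gamma_e\}$ then follows by picking a 2-simplex $\tau\in\DCC_e$ containing $\sigma$, applying monotonicity to obtain $f(\sigma)\le f(\tau)\le f(\Gamma_e)=f(\omega_e)$, and ruling out equality via the same Morse uniqueness argument applied to $f^{-1}(f(\omega_e))=\{\omega_e,\Gamma_e\}$.

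For part (i), the plan is to construct an erasing gradient for $\DCC_e$ by starting from the gradient $\VSS_\DCC$ of \cref{fig:cellAttach}, which collapses $\DCC_e$ to the subcomplex $\{s_e,t_e,\eta_e\}$ and whose very first pair is $(\omega_e,\Gamma_e)$. The eventual-freeness of $\omega_e$ is exactly what is needed to realize $(\omega_e,\Gamma_e)$ as an elementary collapse inside $\KGH$: after the collapses witnessing eventual freeness, $\omega_e$ has a unique 2-coface in the remaining complex, which one can arrange to be $\Gamma_e$. One then modifies $\VSS_\DCC$ when necessary by discarding the three highlighted blue pairs — the ones flagged in \cref{fig:cellAttach} for precisely this purpose — whenever the identifications of rules~3(d) and~3(e) make $\omega_e,\phi_e,\psi_e$ shared with neighbouring gadgets, and verifies that the resulting pairs form an acyclic matching whose support lies in $\DCC_e$, contains all of $\twocomplex{\DCC_e}$, and is upper-closed in $\KGH$, so that \cref{Gradient Collapsing Theorem} yields the required collapse $\KGH\searrow\MCC$ with $\twocomplex{\KGH}\setminus\twocomplex{\MCC}=\twocomplex{\DCC_e}$.

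The hard part is the upper-closure bookkeeping in part (i): potentially shared 1-simplices of $\DCC_e$ have 2-cofaces outside $\DCC_e$, and keeping the erasing collapse entirely inside $\DCC_e$ requires carefully excising the blue pairs of \cref{fig:cellAttach} and leaning on the eventual-freeness hypothesis to compensate. Part (ii), by contrast, reduces to the short maximal-$f$ argument combined with the unique-free-face property of the modified dunce hat recorded in \cref{rem:twoincident}.
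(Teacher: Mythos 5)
Your part (ii) is correct, and is essentially an expanded, rigorous version of the paper's one-sentence argument: the paper simply observes that $\omega_e$ is the unique free $1$-simplex of $\DCC_e$ and $\Gamma_e$ its unique coface, and your maximal-$f$ argument is a clean way of making precise why any erasing gradient must begin at that free face. One small slip: \cref{rem:twoincident} only records the coface counts of $\omega_e,\phi_e,\psi_e$; the fact you actually need -- that \emph{every} $1$-simplex of $\DCC_e$ other than $\omega_e$ has at least two $2$-cofaces inside $\DCC_e$ -- is the statement that $\omega$ is the unique free face of the pure $2$-complex $\DCC$, asserted in the text preceding \cref{fig:cellAttach}.

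Part (i) contains a genuine error. You instruct yourself to discard the three blue pairs of \cref{fig:cellAttach} ``whenever the identifications of rules 3(d) and 3(e) make $\omega_e,\phi_e,\psi_e$ shared with neighbouring gadgets.'' But $\omega_e$ is shared exactly in the cases where the eventual-freeness hypothesis is doing any work, and if you drop $(\omega_e,\Gamma_e)$ from the gradient then $\Gamma_e$ belongs to no pair, is therefore critical, and is never erased -- indeed your own part (ii) shows that $(\omega_e,\Gamma_e)$ must belong to \emph{every} erasing gradient, so discarding it is self-defeating. (It also contradicts your preceding sentence, which correctly says that eventual freeness is what lets you \emph{realize} $(\omega_e,\Gamma_e)$ as a collapse.) The blue pairs are flagged for the later assembly of a global gradient on $\KG$, where $\Gamma_f$ is deliberately left critical for edges $f$ in the feedback arc set and $\phi_e,\psi_e$ may be re-paired as the $\omega$'s of other gadgets; they are not the right thing to discard here. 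The paper's route is shorter and dissolves your ``upper-closure bookkeeping'': since $\omega_e$ is the unique free face of $\DCC_e$ and every shared $1$-simplex of $\DCC_e$ keeps a $2$-coface inside $\DCC_e$, no simplex of $\DCC_e$ can be removed by any collapse before $\omega_e$ becomes free; hence the collapse $\KGH\searrow\LCC$ witnessing eventual freeness satisfies $\DCC_e\subseteq\LCC$, the unique surviving coface of $\omega_e$ in $\LCC$ is \emph{forced} (not ``arranged'') to be $\Gamma_e$, and one then performs $(\omega_e,\Gamma_e)$ followed by the remaining $(1,2)$-pairs of $\VSS_\DCC$, whose $1$-simplices are all private to $\DCC_e$ (the content of \cref{lem:remainerase}). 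Your plan becomes correct once you keep $(\omega_e,\Gamma_e)$, note that the two vertex--edge blue pairs are irrelevant to erasing $\twocomplex{\DCC_e}$, and perform the eventual-freeness collapses \emph{before} invoking \cref{Gradient Collapsing Theorem} rather than trying to apply it to $\KGH$ directly.
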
          
\begin{pf} Suppose $\omega_e$ is eventually free in $\KGH$. Then there exists a subcomplex $\LCC$ of $\KGH$ such that $\KGH\searrow \LCC$ and $\omega_e$ is free in $\LCC$.
Note that, by construction of $\DCC$, this implies that $\DCC_e$ is a subcomplex of $\LCC$.
Now using the gradient specified in \cref{fig:cellAttach}
all the 2-simplices of $\DCC_e$ can be collapsed, making $\DCC_e$ erasable in $\KGH$. This proves the first statement of the lemma. The second statement of the lemma immediately follows from observing that $\omega_e$ is the unique free $1$-simplex in complex $\DCC_e$, $\Gamma_e$ is the unique coface incident on $\omega_e$, and $\DCC_e$ is erasable in $\KGH$ through the gradient $\VSS$ of $f$.   
\end{pf}         

\begin{lem} \label{lem:erasehelper} For a subgraph $\HCC$ of a directed degree-3 graph $\GCC$ and a vertex $v \in \vertH$ with $\outDH{v} = l > 0$ and outgoing edges $\left\{ f_1,\dots, f_l \right\} \in \edgeH$, we have:
\begin{enumerate}[(i)]
\item If $\inDH{v} = 0$, then each $\omega_{f_j}$ is free in~$\KGH$.
\item If $\inDH{v} = k>0$, let $\left\{ e_1,\dots, e_k \right\} \in \edgeH$ be the set of incoming edges of $v$. If there is a gradient $\VSS$ such that each $\omega_{e_i}$ is eventually free in $\KGH$ through $\VSS$, then each $\omega_{f_j}$ is eventually free in~$\KGH$ through $\VSS$ as well.
\end{enumerate}
\end{lem}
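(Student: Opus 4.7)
My plan for part~(i) is to show directly that the 1-simplex $\omega_{f_j}$ acquires no additional cofaces when one passes from the isolated gadget $\DCC_{f_j}$ to the full complex $\KGH$. A new coface can arise only if $\omega_{f_j}$ is identified with another 1-simplex, and the construction performs such identifications only at endpoints of the edge $f_j$. At the source $v$, both rules~(d) and~(e) require $\inDH{v}>0$, which is excluded by hypothesis, so no identification takes place there. At the target of $f_j$, rules~(d) and~(e) identify only the $\omega$'s of \emph{outgoing} edges (with $\phi$'s or $\psi$'s of incoming ones), so $\omega_{f_j}$ --- which is incoming at the target --- is untouched. Consequently, its sole coface in $\KGH$ is $\Gamma_{f_j}$, and it is free.

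For part~(ii), the plan is to exploit erasability. By hypothesis each $\omega_{e_i}$ is eventually free in $\KGH$, so \cref{lem:erasefree}(i) yields that every gadget $\DCC_{e_i}$ is erasable in $\KGH$. Applying \cref{lem:unionErasable} inductively, the union $\LCC = \bigcup_{i=1}^{k}\DCC_{e_i}$ is itself erasable. Combining $\VSS$ with the auxiliary gradients supplied by \cref{lem:erasefree}(i) and \cref{lem:unionErasable}, one obtains a gradient inducing a collapse $\KGH \searrow \MCC$ in which no 2-simplex of $\LCC$ survives.

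Next I would count the cofaces of $\omega_{f_j}$ in $\MCC$. Because identifications of $\omega_{f_j}$ with other edges happen only at $v$ via rules~(d) or~(e), every such identification merges $\omega_{f_j}$ with some $\phi_{e_i}$ or $\psi_{e_i}$; by \cref{rem:twoincident} each of these carries exactly two 2-cofaces, both lying in $\DCC_{e_i} \subset \LCC$. Erasing $\LCC$ therefore removes all of these extra cofaces, leaving $\Gamma_{f_j} \in \DCC_{f_j}$ --- which is disjoint from $\LCC$ --- as the unique 2-coface of $\omega_{f_j}$ in $\MCC$. Hence $\omega_{f_j}$ is free in $\MCC$, and therefore eventually free in $\KGH$.

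The main obstacle I foresee is administrative rather than conceptual: one must ensure that the various erasing gradients for the individual $\DCC_{e_i}$'s can be combined into a single gradient (ideally extending $\VSS$), which is precisely what \cref{lem:unionErasable} provides. Once this assembly is in place, the coface bookkeeping via \cref{rem:twoincident} delivers the conclusion cleanly.
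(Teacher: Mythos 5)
Your proposal is correct and follows essentially the same route as the paper: part (i) by inspecting which identification rules can touch $\omega_{f_j}$ (the paper simply says ``by construction''), and part (ii) by combining \cref{lem:erasefree}(i) with \cref{lem:unionErasable} and the coface count from \cref{rem:twoincident} to conclude that after erasing $\bigcup_i \DCC_{e_i}$ the only remaining $2$-coface of $\omega_{f_j}$ is $\Gamma_{f_j}$. The only quibble is the phrase that $\DCC_{f_j}$ is ``disjoint from'' the union of the $\DCC_{e_i}$ --- they do share identified $1$- and $0$-simplices --- but what you actually need, namely that the $2$-simplex $\Gamma_{f_j}$ lies in no $\DCC_{e_i}$, is true since $2$-simplices are never identified.
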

\begin{pf} If $\inDH{v} = 0$, then each $\omega_{f_j}$ is free by construction of~$\KGH$.
Now suppose that $\inDH{v} = k>0$ and each $\omega_{e_i}$ is eventually free in $\KGH$.  
From Remark~\ref{rem:twoincident} and from the construction of the complex $\KGH$, it can be deduced that for any edge $f_j$, the only $2$-simplices incident on $\omega_{f_j}$ in the complex $\KGH$ are $\Gamma_{f_j}$ and one pair of $2$-simplices of $\DCC_{e_i}$ for each $e_i$.
By assumption, each $\omega_{e_i}$ is eventually free, and so by part (i) of \cref{lem:erasefree}, each $\DCC_{e_i}$ is erasable. Hence, by \cref{lem:unionErasable}, their union is erasable too. This means that
$\KGH$ collapses to a complex $\MCC$ 
in which each $\omega_{f_j}$ is free, proving the claim.
\end{pf} 

\begin{lem} \label{lem:acycliceqerase} A subgraph $\HCC$ of a directed degree-3 graph $\GCC$ is acyclic if and only if the corresponding complex $\KGH$ is erasable. \end{lem}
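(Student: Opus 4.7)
The plan is to establish both implications separately, using the structural lemmas \cref{lem:erasefree,lem:erasehelper,lem:unionErasable} together with the modified-dunce-hat attachment rules from the construction.

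For the forward direction ($\HCC$ acyclic $\Rightarrow$ $\KGH$ erasable), I would argue by induction on the topological depth of an edge $e$ in the DAG $\HCC$, showing that $\omega_e$ is eventually free in $\KGH$. The base case, in which the source of $e$ has no incoming edges in $\HCC$, is exactly \cref{lem:erasehelper}(i). In the inductive step the incoming edges at the source of $e$ have strictly smaller depth, so the inductive hypothesis combined with \cref{lem:erasehelper}(ii) gives that $\omega_e$ is eventually free. Then \cref{lem:erasefree}(i) gives that each $\DCC_e$ is erasable in $\KGH$, and iterating \cref{lem:unionErasable} shows that $\bigcup_{e \in \edgeH} \DCC_e$ is erasable; since this union carries every 2-simplex of $\KGH$, the complex $\KGH$ collapses to a 1-dimensional subcomplex and is therefore erasable.

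For the backward direction, I would prove the contrapositive: if $\HCC$ contains a directed cycle $e_1,\dots,e_k$ with $e_{k+1}=e_1$, then $\KGH$ is not erasable. Assume for contradiction that $\KGH$ is erasable through a gradient $\VSS$ induced by a Morse function $f$. The key claim, applying \cref{lem:erasefree}(ii) to each $\DCC_{e_i}$, is that $(\omega_{e_i},\Gamma_{e_i})\in\VSS$ and $f(\omega_{e_i})>f(\sigma)$ for every $\sigma\in \DCC_{e_i}\setminus\{\omega_{e_i},\Gamma_{e_i}\}$. Granted this, by the construction at the vertex joining $e_i$ (incoming) and $e_{i+1}$ (outgoing) on the cycle (case 4 or case 5, depending on the degrees), the simplex $\omega_{e_{i+1}}$ is identified with either $\phi_{e_i}$ or $\psi_{e_i}$, both of which lie in $\DCC_{e_i}\setminus\{\omega_{e_i},\Gamma_{e_i}\}$. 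Hence $f(\omega_{e_i})>f(\omega_{e_{i+1}})$ for every $i$, yielding the contradictory chain $f(\omega_{e_1})>f(\omega_{e_2})>\cdots>f(\omega_{e_k})>f(\omega_{e_1})$.

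The main obstacle is justifying the key claim, since formally invoking \cref{lem:erasefree}(ii) requires $\DCC_{e_i}$ to be erasable in $\KGH$ through the global gradient $\VSS$; yet on a cycle the simplex $\omega_{e_i}$ acquires extra 2-cofaces from an adjacent dunce hat and is not free in $\KGH$ until collapses outside $\DCC_{e_i}$ have been performed. I would resolve this by a direct Morse-theoretic argument: schedule the $\VSS$-induced collapse in decreasing order of $f$-value, and consider a 2-simplex $\rho$ of maximal $f$-value in $\DCC_{e_i}$. At the moment $\rho$ is collapsed, no other 2-simplex of $\DCC_{e_i}$ has yet been removed, so the 1-simplex paired with $\rho$ must have a unique 2-coface inside $\DCC_{e_i}$. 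By the structure of the modified dunce hat the only such 1-simplex is $\omega_{e_i}$, forcing $\rho=\Gamma_{e_i}$; the inequality $f(\omega_{e_i})>f(\sigma)$ for the remaining $\sigma\in \DCC_{e_i}$ then follows from monotonicity of $f$ together with the fact that neither $\phi_{e_i}$ nor $\psi_{e_i}$ is a facet of $\Gamma_{e_i}$, which recovers the conclusion of \cref{lem:erasefree}(ii) for the global gradient and completes the proof.
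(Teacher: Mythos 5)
Your proposal is correct and follows essentially the same route as the paper's proof: the forward direction is the same induction along a topological order of $\HCC$ using \cref{lem:erasehelper}, \cref{lem:erasefree}(i) and \cref{lem:unionErasable}, and the backward direction derives the same contradictory chain $f(\omega_{e_1})>f(\omega_{e_2})>\cdots>f(\omega_{e_k})>f(\omega_{e_1})$ from the identifications $\omega_{e_{i+1}}\sim\phi_{e_i}$ or $\psi_{e_i}$ along a directed cycle. The only difference is that the paper simply invokes \cref{lem:erasefree}(ii) at that point, whereas you justify its conclusion for the global gradient directly via the maximal-$f$-value $2$-simplex argument; that extra care addresses a real subtlety in how the hypothesis of \cref{lem:erasefree}(ii) is met, but it does not change the structure of the proof.
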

\begin{pf} Suppose that the given subgraph $\HCC$ of $\GCC$, with $n$ vertices , is acyclic. Consider an arbitrary total order on the vertices of $\HCC$ consistent with the partial order induced by the edges in $\HCC$, and index the vertices $\{v_1,v_2,\dots,v_n\}$ according to this total order. 
We can now apply Lemma~\ref{lem:erasehelper} and part (i) of Lemma~\ref{lem:erasefree} inductively for all $v_i$ from $v_1$ to $v_{n-1}$ to establish the erasability of $\DCC_{f_j}$ for each of the outgoing edges $f_j$ of $v_i$ in $\HCC$. Hence, the entire complex $\KGH$ is erasable.

To show the reverse implication, we prove that if $\HCC$ has directed cycles, then $\KGH$ is not erasable. Assume for a contradiction that $\KGH$ is erasable through a gradient $\VSS$, and let~$f$ be a discrete Morse function with that gradient.
Let $a,b$ be two consecutive edges in a directed cycle of $\KGH$.
Then, by construction of $\KGH$, either $\phi_a \sim \omega_b$ or $\psi_a \sim \omega_b$, and so by part (ii) of \cref{lem:erasefree} we have $f(\omega_a) > f(\omega_b)$. Applying this argument to each pair of consecutive edges in the cycle yields a contradiction.
Hence, if $\HCC$ has directed cycles, then $\KGH$ is not erasable.
\end{pf}

\begin{lem} \label{lem:remainerase} For any edge $e\in\ECC(\GCC)$, the subcomplex 
$\DCC_e \setminus \left\{\Gamma_e\right\}$ is erasable in $\KG$.
\end{lem}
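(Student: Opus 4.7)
The plan is to build a discrete gradient $\VSS$ on $\KG$ whose matched pairs lie entirely inside $\DCC_e$, cover every $2$-simplex of $\DCC_e$ except $\Gamma_e$, and leave $\Gamma_e$ together with all of its $0$- and $1$-faces critical. Such a gradient will give a collapse $\KG \searrow \MCC$, via \cref{Gradient Collapsing Theorem}, whose removed simplices are exactly the matched pairs of $\VSS$; these lie in $\DCC_e \setminus \{\Gamma_e\}$ by construction and include every non-$\Gamma_e$ $2$-simplex of $\DCC_e$, which is precisely the erasability condition.

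To construct $\VSS$, I would begin with the explicit gradient $\VSS_\DCC$ of \cref{fig:cellAttach}, whose only critical simplices in $\DCC_e$ are $s_e$, $t_e$, and $\eta_e$. First discard the pair $(\omega_e, \Gamma_e)$, so that $\omega_e$ and $\Gamma_e$ become critical. Inspecting \cref{fig:cellAttach} shows that the remaining $0$- and $1$-faces of $\Gamma_e$ still matched in $\VSS_\DCC$ are the vertex $u_e$ and the third $1$-face of $\Gamma_e$ (the one opposite $s_e$); I would free them by a short sequence of local pair swaps. For each such face $f$ matched in $\VSS_\DCC$ with a partner $\xi$, drop $(f,\xi)$ and re-pair $\xi$ with an incident simplex of the complementary dimension that is not itself a face of $\Gamma_e$, which is in turn freed by discarding the $\VSS_\DCC$-pair it previously participated in. Direct inspection of \cref{fig:cellAttach} verifies that suitable alternative partners exist within a small neighbourhood of $\Gamma_e$, and that the cascade can be chosen to keep the matching acyclic.

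I would then extend $\VSS$ to all of $\KG$ by declaring every simplex outside $\DCC_e$ critical. Acyclicity in $\KG$ reduces to acyclicity on $\DCC_e$: every reversed Hasse edge of $\VSS$ lies inside $\DCC_e$, and every face of a $2$-simplex of $\DCC_e$ is itself a simplex of $\DCC_e$, so any directed cycle in the modified Hasse diagram of $\KG$ must stay entirely within $\DCC_e$. Taking $\MCC$ to be the subcomplex generated by the critical simplices of $\VSS$ then verifies the hypothesis of \cref{Gradient Collapsing Theorem} and completes the proof. The main obstacle is verifying acyclicity of the modified matching on $\DCC_e$, because a naive choice of replacement partner can close a directed cycle through a long chain of surviving pairs of $\VSS_\DCC$; the swaps must be arranged so that each new reversed Hasse edge terminates at a critical simplex along both downward continuations, which is a concrete finite check on the fixed triangulation of \cref{fig:cellAttach}.
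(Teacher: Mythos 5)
Your proposal contains a genuine gap at the step ``the cascade can be chosen to keep the matching acyclic,'' and the difficulty you flag there is not a finite check that happens to work out --- it is an impossibility. You are asking for a gradient on $\KG$ whose pairs erase every $2$-simplex of $\DCC_e$ except $\Gamma_e$ while leaving $\Gamma_e$ \emph{and all of its faces} in the resulting subcomplex $\MCC$. By \cref{Gradient Collapsing Theorem}, such a gradient would yield a sequence of elementary collapses of $\KG$ onto $\MCC$ removing only simplices of $\DCC_e \setminus \{\Gamma_e\}$. But the first elementary collapse in any such sequence needs a free face whose unique coface is a $2$-simplex of $\DCC_e$ other than $\Gamma_e$, and no such face exists: every $1$-simplex of $\DCC_e$ other than $\omega_e$ has at least two $2$-dimensional cofaces already inside $\DCC_e$, and the unique coface of $\omega_e$ is $\Gamma_e$ itself. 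This is precisely the defining property of the modified dunce hat --- it is collapsible \emph{only} through $\omega_e$, so $\Gamma_e$ must be the first triangle to go, and no erasure of $\DCC_e$ can keep $\Gamma_e$ together with its full boundary. Concretely, once you re-pair the second coface of the edge $u_e t_e$ (the triangle $u_e t_e v_e$) with $u_e v_e$, the triangle $u_e v_e q_e$ must move to another of its faces, and chasing the resulting V-paths around the stars of the vertices $v_e$, $r_e$, $q_e$ always closes a directed cycle; there is no acyclic escape.

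The paper's proof does not attempt what you attempt. It keeps the gradient of \cref{fig:cellAttach} verbatim except for discarding the single pair $(\omega_e,\Gamma_e)$; in particular the edge $u_e t_e$ \emph{remains paired} with its other coface and is consumed by the erasure, so the faces of $\Gamma_e$ are not all preserved. The point is that this erasure is carried out (in the proof of \cref{prop:erkprop}) only after $\Gamma_e$ has been deleted from the complex: in $\KG \setminus \{\Gamma_e\}$ the edge $u_e t_e$ is a genuine free face, and the twelve remaining $(1,2)$-pairs collapse the triangles of $\DCC_e$ one after another. The only additional observation needed --- which your proposal does capture correctly in its last paragraph --- is that these twelve $1$-simplices are private to $\DCC_e$, so the local gradient extends to $\KG$ with everything else critical. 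To repair your write-up, drop the requirement that $u_e$ and $u_e t_e$ survive, and prove erasability of $\DCC_e \setminus \{\Gamma_e\}$ in the complex from which $\Gamma_e$ has already been removed; that weaker statement is what the downstream argument actually uses.
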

\begin{pf} 
Consider the discrete gradient specified in \cref{fig:cellAttach} as a gradient $\VSS_e$ on $\DCC_e \subseteq \KG$.
First note that $\DCC_e \setminus \left\{\Gamma_e\right\}$ is erasable in $\DCC_e$ through the gradient $\VSS_e \setminus \{(\omega_e,\Gamma_e)\}$.
Moreover, all 1-simplices of $\DCC_e$ that are paired in $\VSS_e$ with a 2-simplex do not appear in $\DCC_c$ for any edge $c \neq e$.
It follows that $\DCC_e \setminus \left\{\Gamma_e\right\}$ is erasable in  $\KG$.
\end{pf}

\begin{lem}\label{lem:feedbackhelper} Let $\CCC$ be a set of $2$-simplices such that $\KG \setminus \CCC$ is erasable, and let $\FCC = \{ f\in\ECC \mid \allowbreak \CCC\cap\DCC_f \neq \emptyset \}$. Then $\FCC$ is a feedback arc set of $\GCC$.
\end{lem}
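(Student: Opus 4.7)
The plan is to argue by contradiction. Suppose $\FCC$ is not a feedback arc set, so that the subgraph $\HCC = (\VCC, \ECC \setminus \FCC)$ contains a directed cycle. The goal is then to derive a contradiction by showing that $\KGH$ must itself be erasable, which \cref{lem:acycliceqerase} forbids.

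The first step is the observation $\KGH \subseteq \KG \setminus \CCC$. The definition of $\FCC$ guarantees $\DCC_f \cap \CCC = \emptyset$ for every $f \in \ECC \setminus \FCC$, and $\KGH$ is assembled out of precisely these $\DCC_f$ (and sits inside $\KG$ as a subcomplex by construction), so none of its simplices belong to $\CCC$.

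Next, I would transport an erasing gradient $\VSS$ of $\KG \setminus \CCC$ to $\KGH$ by restriction:
\[
\VSS' := \{(\sigma,\tau) \in \VSS : \sigma,\tau \in \KGH\}.
\]
The central claim is that $\VSS'$ is a discrete gradient on $\KGH$ pairing every $2$-simplex. Being a matching is inherited from $\VSS$. The oriented Hasse diagram of $\VSS'$ on $\KGH$ is simply the restriction of the oriented Hasse diagram of $\VSS$ on $\KG \setminus \CCC$ to the nodes of $\KGH$, so acyclicity is inherited too (a subgraph of a DAG is a DAG). Coverage is immediate: any $\tau \in \twocomplex{\KGH}$ is paired in $\VSS$ with some facet $\sigma$, and $\sigma \in \KGH$ since $\KGH$ is a subcomplex. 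Letting $\LCC \subseteq \KGH$ be the subcomplex obtained by discarding every $(1,2)$-pair of $\VSS'$, the Gradient Collapsing Theorem (\cref{Gradient Collapsing Theorem}) yields $\KGH \searrow \LCC$ onto a $1$-dimensional subcomplex, so $\KGH$ is erasable.

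Applying \cref{lem:acycliceqerase} then forces $\HCC$ to be acyclic, contradicting the assumed cycle. I do not foresee any serious obstacle; the only point demanding care is the verification that $\VSS'$ inherits matching, acyclicity, and coverage of $2$-simplices from $\VSS$, each of which follows directly from $\KGH \subseteq \KG \setminus \CCC$ being a subcomplex, so that facets of $2$-simplices of $\KGH$ remain inside $\KGH$.
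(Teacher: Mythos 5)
Your proof is correct and follows essentially the same route as the paper: observe $\KGH \subseteq \KG \setminus \CCC$, note that a subcomplex of an erasable complex is erasable, and invoke \cref{lem:acycliceqerase}. The only difference is cosmetic — you frame it as a contradiction and spell out the gradient-restriction argument that the paper dismisses as ``easily seen,'' and your verification of that step (matching, acyclicity, and coverage of $2$-simplices all inherited under restriction to a subcomplex) is sound.
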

\begin{pf} Each $2$-simplex $\sigma \in \CCC$ lies in $\DCC_f$ for a unique $f\in\ECC$, which implies $f\in\FCC$. 
In particular, $\sigma \in \CCC$ implies $\sigma \not \in \DCC_e$ for any $e \in \ECC \setminus \FCC$. 
Now consider the subgraph $\HCC = (\VCC, \ECC \setminus \FCC)$ of $\GCC$.
Then ${\KGH} \subseteq \KG \setminus \CCC$ is erasable, since any subcomplex of an erasable complex is easily seen to be erasable. 
Hence, by Lemma~\ref{lem:acycliceqerase}, $\HCC$ is acyclic, i.e., $\FCC$ is a feedback arc set of $\GCC$.
\end{pf}

\begin{prop} \label{prop:erkprop} Given an oriented degree-3 graph $\GCC$ and the corresponding complex $\KG$, $\erkg = \opt\minFAS(\GCC)$.
\end{prop}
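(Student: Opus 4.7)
The proposition claims an equality, so the plan is to prove the two inequalities separately.

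For the easy direction $\erkg \geq \opt\minFAS(\GCC)$: by the equivalent definition of $\erkg$ as the minimum number of critical 2-simplices of any discrete gradient on $\KG$, there exists a set $\CCC$ of $\erkg$ 2-simplices such that $\KG \setminus \CCC$ is erasable. \cref{lem:feedbackhelper} then hands us a feedback arc set $\FCC$ of $\GCC$ with $|\FCC| \leq |\CCC| = \erkg$, so $\opt\minFAS(\GCC) \leq \erkg$.

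For the converse $\erkg \leq \opt\minFAS(\GCC)$, which is the substantive direction, I would start from a minimum feedback arc set $\FCC$ and set $\HCC = (\VCC, \ECC \setminus \FCC)$, so that $\HCC$ is acyclic. The target is a gradient on $\KG$ whose critical 2-simplices are contained in $\{\Gamma_f : f \in \FCC\}$. My plan is to build such a gradient in two stages. In stage one, by \cref{lem:remainerase}, each $\DCC_f \setminus \{\Gamma_f\}$ is erasable in $\KG$, and iterating \cref{lem:unionErasable} over the family $\{\DCC_f \setminus \{\Gamma_f\}\}_{f \in \FCC}$ yields a collapse of $\KG$ to some subcomplex $\NCC$ whose 2-simplices are $\twocomplex{\KGH} \cup \{\Gamma_f : f \in \FCC\}$. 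In stage two, I erase the 2-simplices of $\KGH$ inside $\NCC$ by replaying the topological-order induction of \cref{lem:acycliceqerase}: I fix a topological ordering of the vertices of $\HCC$ and, for each $e \in \edgeH$ processed in the order of its source vertex, collapse $\DCC_e$ using \cref{lem:erasefree} with $\omega_e$ as the eventually free face.

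The crux, and what I expect to be the main obstacle, is verifying that $\omega_e$ genuinely becomes eventually free in $\NCC$ as the stage-two induction proceeds. Apart from $\Gamma_e$, the cofaces of $\omega_e$ in $\KG$ are exactly the two 2-simplices of $\DCC_j$ incident on the identified edge $\omega_e \sim \phi_j$ (or $\psi_j$), where $j$ is the incoming edge at $e$'s source vertex. If $j \in \FCC$, those cofaces were already removed in stage one; if $j \in \edgeH$, then $j$'s source precedes $e$'s source topologically, so $\DCC_j$ has been collapsed earlier in stage two and its 2-simplices are gone. This is precisely the mechanism underlying \cref{lem:erasehelper}, so stage two goes through as a direct analogue of \cref{lem:acycliceqerase} staged inside $\NCC$ in place of $\KGH$. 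Composing the two stages produces a collapse $\KG \searrow \MCC$ with $\twocomplex{\MCC} = \{\Gamma_f : f \in \FCC\}$, inducing a gradient with at most $|\FCC|$ critical 2-simplices and hence $\erkg \leq |\FCC| = \opt\minFAS(\GCC)$.
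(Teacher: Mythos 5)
Your proposal is correct and follows essentially the same route as the paper: the lower bound comes from \cref{lem:feedbackhelper} together with the fact that each $2$-simplex lies in a unique gadget $\DCC_e$, and the upper bound from erasing $\DCC_f \setminus \{\Gamma_f\}$ for each feedback edge via \cref{lem:remainerase} and then erasing the residual acyclic part. The only cosmetic difference is that where you replay the topological-order induction by hand inside $\NCC$, the paper instead notes that the pure subcomplex of the residual complex induced by its $2$-simplices is precisely $\KGH$ and invokes \cref{lem:acycliceqerase} as a black box.
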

\begin{pf} Given a graph $\GCC = (\VCC,\ECC)$, let $\FCC \subseteq \ECC$ be a minimum feedback arc set of $\GCC$, and let $\HCC=(\VCC,\ECC \setminus \FCC)$ be the corresponding maximum acyclic subgraph. We construct a new complex $\KCC^{\prime}$ from $\KG$ as follows: For every $f \in \FCC$, we remove $\{\Gamma_f\}$ from $\DCC_f \subseteq \KG$ and erase $\DCC_f \setminus \{\Gamma_f\}$ in $\KG$ using Lemma~\ref{lem:remainerase}. 
Note that $(\KG \setminus \{\Gamma_f \mid f \in \FCC\}) \searrow \KCC^{\prime}$.
In order to show that $\KCC^{\prime}$ is erasable, it suffices to show erasability of the pure subcomplex of~$\KCC^{\prime}$ induced by its $2$-simplices. It is easy to check that the subcomplex of $\KCC^{\prime}$ induced by the $2$-simplices in $\KCC^{\prime}$ is precisely $\KGH$. However, from Lemma~\ref{lem:acycliceqerase}, we can deduce that $\KGH$ is erasable. This implies that $\KCC^{\prime}$ is erasable, and hence $\left(\KG \setminus \left\{\Gamma_f \mid f \in \FCC\right\}\right)$ is erasable. Since the total number of 2-simplices that were removed to erase $\KG$ is equal to $\left| \FCC \right| = \opt\minFAS(\GCC)$, we have established that $\erkg \leq \opt\minFAS(\GCC)$.

Now assume for a contradiction that $\erkg < \opt\minFAS(\GCC)$. Let $\CCC$ be a minimal set of $2$-simplices that need to be removed to erase $\KG$, i.e., $\left|\CCC\right| = \erkg$. Let $\FCC^{\prime} = \{ f \in \ECC \mid \CCC\cap\DCC_f \neq \emptyset  \}$. By Lemma~\ref{lem:feedbackhelper}, the graph $(\VCC,\ECC \setminus\FCC^{\prime})$ is acyclic and $\FCC^{\prime}$ is a feedback arc set. Since each $2$-simplex lies in $\DCC_e$ for some unique $e\in\ECC$, it follows that 
$\left|\FCC^{\prime}\right| \leq \left|\CCC\right|$.
We conclude that
$\left|\FCC^{\prime}\right| < \opt\minFAS(\GCC)$, which contradicts the minimality of $\opt\minFAS(\GCC)$. Hence, the claim follows.
\end{pf}

In order to relate the homotopy type of $\GCC$ with that of $\KG$, we construct a new complex $\tildeKG$ as follows:
\begin{enumerate}
\item Start with a disjoint union of copies of $\DCC$, one for each edge in $\GCC$, denoted by $\DCC_e$.
\item Similar to the construction of $\KG$, the complex $\tildeKG$ is constructed by identifying some of the distinguished vertices of each gadget $\DCC_e$ based on the following rules, as applied to each vertex of $\GCC$ based on its indegree and outdegree:
\begin{enumerate}
	\item For every vertex of $\GCC$ that has incoming as well as outgoing edges, identify $s_j$ with $t_i$ for every incoming edge $i$  and outgoing edge $j$ of $\GCC$.
	\item For every vertex of $\GCC$ that has only incoming edges, identify all 0-simplices $t_e$ for every incoming edge $e$.
	\item For ever vertex of $\GCC$ that has only outgoing edges, identify all 0-simplices $s_e$ for every outgoing edge $e$.
\end{enumerate}
\end{enumerate}

\begin{lem}\label{lem:topoprop} 
$\tildeKG$ is homotopy equivalent to $\KG$.
\end{lem}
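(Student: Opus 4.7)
The plan is to realize $\KG$ as a pushout from $\tildeKG$ in which a disjoint union of contractible subcomplexes is glued along another disjoint union of contractible subcomplexes via a homotopy equivalence, and then to invoke the standard fact that pushouts of homotopy equivalences along cofibrations preserve homotopy type. Comparing the two constructions, $\tildeKG$ and $\KG$ prescribe identical $0$-simplex identifications (at each vertex of $\GCC$ one identifies all $s_e$ with all $t_f$ for outgoing $e$ and incoming $f$), so they differ only in the additional $1$-simplex identifications (together with incident outer endpoints) imposed by rules (d) and (e) in the construction of $\KG$. Hence there is a natural quotient map $q\colon \tildeKG \to \KG$, and the task reduces to showing that $q$ is a homotopy equivalence.

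The key step is to describe, for each vertex $v$ of $\GCC$ at which rule (d) or (e) applies, a subcomplex $T_v \subseteq \tildeKG$ consisting of the distinguished edges $\omega_\bullet, \phi_\bullet, \psi_\bullet$ appearing in the identifications at $v$, together with their endpoints. Inspecting each case reveals that $T_v$ is a star of edges meeting at the central $0$-simplex at $v$ (the identified class of the $s$'s and $t$'s at $v$): four edges for rule (d), and two or three edges for rule (e) depending on the indegree. Thus $T_v$ is a tree, in particular contractible, and its image $T_v' = q(T_v) \subseteq \KG$ is again a star (with fewer edges), also a contractible tree. Crucially, each distinguished $1$-simplex $\omega_e$, $\phi_e$, or $\psi_e$ participates in identifications at exactly one vertex of $\GCC$ (the source of $e$ for $\omega_e$, and the target of $e$ for $\phi_e, \psi_e$), while the outer endpoints $u_e, v_e, w_e$ are not identified outside the local picture; hence the $T_v$ for distinct $v$ are pairwise disjoint in $\tildeKG$. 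Setting $Z = \bigsqcup_v T_v$ and $Z' = \bigsqcup_v T_v'$, the restriction of $q$ factors through a disjoint union $f\colon Z \to Z'$ of maps between contractible spaces, which is therefore a homotopy equivalence. By construction, $\KG$ is exactly the pushout $\tildeKG \cup_Z Z'$ of the CW subcomplex inclusion $Z \hookrightarrow \tildeKG$ along $f$, so the standard pushout principle yields $\tildeKG \simeq \KG$.

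The main obstacle is the case analysis for the structure of the local subcomplexes $T_v$: one must verify, for each subcase of rules (d) and (e), that $T_v$ is indeed a star with the claimed edge count, that the identifications at $v$ act on $T_v$ precisely by the prescribed collapse onto $T_v'$, and that no distinguished edge of one gadget is shared between the $T_v$'s of two different vertices of $\GCC$. Once this combinatorial structure is pinned down, the pushout conclusion is formal.
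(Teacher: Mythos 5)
Your proof is correct, and it rests on the same combinatorial observation as the paper's: $\KG$ is obtained from $\tildeKG$ by further identifying certain pairs of $1$-simplices that already share a vertex, so the gluing happens along contractible one-dimensional configurations. The two arguments finish differently, however. The paper collapses, in each of $\tildeKG$ and $\KG$, the contractible union of each identified pair of $1$-simplices to a point (citing Hatcher's Proposition~0.17, that quotienting a CW complex by a contractible subcomplex preserves homotopy type) and observes that both collapses yield the same quotient space. You instead organize the identified edges into the stars $T_v$, exhibit $\KG$ as the pushout $\tildeKG\cup_Z Z'$ with $Z=\bigsqcup_v T_v$ a disjoint union of contractible trees, and invoke the gluing lemma for pushouts of homotopy equivalences along cofibrations. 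Your route uses slightly heavier machinery but is more careful at vertices of type~(d), where the two identifications share the central vertex: there, ``identifying each pair to a single point'' in the paper's phrasing in fact forces the entire four-edge star to a single point, and treating $T_v$ as a unit makes this interaction explicit. One small imprecision: a distinguished edge participates in identifications at \emph{at most} one vertex of $\GCC$ rather than exactly one (e.g.\ $\omega_e$ is untouched when the source of $e$ has indegree $0$), but this does not affect your disjointness argument or the conclusion.
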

\begin{pf}
Comparing the two constructions, first note that $\KG$ can be obtained from $\tildeKG$ by further identifying certain 1-simplices $\phi_e \sim \omega_f$ or $\psi_e \sim \omega_f$ (together with vertices $v_e \sim u_f$ or $w_e \sim u_f$) in subcomplexes $\DCC_e,\DCC_f \subseteq \tildeKG$, where these two 1-simplices already have a common vertex by the identification $t_e \sim s_f$ in the construction of $\tildeKG$, and are otherwise not connected by another 1-simplex. 
In both $\tildeKG$ and $\KG$, the union of the two 1-simplices is contractible, and so each complex is homotopy equivalent to the space that further identifies each such pair of 1-simplices to a single point \cite[Proposition 0.17]{Hatcher2002Algebraic}. The claim follows.
\end{pf}

\begin{lem}\label{lem:tildeGcollapses}  $\tildeKG$ collapses to the undirected graph underlying $\GCC$. 
\end{lem}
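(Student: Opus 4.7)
The plan is to exhibit an explicit discrete gradient on $\tildeKG$ whose critical simplices form precisely the subcomplex isomorphic to the undirected graph underlying $\GCC$, and then invoke Forman's Collapsing Theorem (\cref{Gradient Collapsing Theorem}).

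Concretely, I take the gradient $\VSS_\DCC$ on $\DCC$ depicted in the right half of \cref{fig:cellAttach}, which leaves exactly $s$, $t$, and $\eta$ critical. For every edge $e \in \ECC$, this gradient pulls back to a gradient $\VSS_{\DCC_e}$ on the copy $\DCC_e \subseteq \tildeKG$, and I set $\widetilde\VSS = \bigcup_{e \in \ECC} \VSS_{\DCC_e}$. The first verification is that $\widetilde\VSS$ is a well-defined gradient on $\tildeKG$: the subcomplexes $\DCC_e$ are pairwise disjoint above dimension zero (the construction of $\tildeKG$ only identifies $0$-simplices $s_e$ and $t_e$), and the critical simplices of $\VSS_\DCC$ include exactly the vertices $s$ and $t$, so the shared vertices in $\tildeKG$ are not involved in any gradient pair. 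Thus the union of the $\VSS_{\DCC_e}$ is a disjoint union of matchings in the Hasse diagram of $\tildeKG$.

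Next I must check acyclicity of $\widetilde\VSS$. I consider gradient paths separately in dimensions $(0,1)$ and $(1,2)$. In both cases, starting from any simplex $\sigma \in \DCC_e$ that is paired with a simplex $\tau \in \DCC_e$, the next simplex in the path is a facet of $\tau$ distinct from $\sigma$, hence also in $\DCC_e$; the path therefore cannot escape the single copy $\DCC_e$, and since each $\VSS_{\DCC_e}$ is acyclic on $\DCC_e$, no cycle in $\widetilde\VSS$ can arise. The one place this could go wrong would be if a $(0,1)$-path reached a shared vertex and continued into another gadget, but shared vertices are precisely $s_e$ and $t_e$, which are critical and hence terminate any gradient path.

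The set of critical simplices of $\widetilde\VSS$ is $\LCC = \{s_e, t_e, \eta_e \mid e \in \ECC\}$. Since the only faces of each $\eta_e$ are its endpoints $s_e, t_e$, which lie in $\LCC$, the set $\LCC$ is a subcomplex of $\tildeKG$. By \cref{Gradient Collapsing Theorem}, $\tildeKG \searrow \LCC$. Finally, by the identifications defining $\tildeKG$, for each vertex $v$ of $\GCC$ all $s_e$ (for outgoing $e$ at $v$) and all $t_e$ (for incoming $e$ at $v$) are identified into a single vertex, while $\eta_e$ is an edge joining the vertex of $\GCC$ that is the tail of $e$ to the vertex that is its head. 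Thus $\LCC$ is isomorphic as a simplicial complex to the undirected graph underlying $\GCC$, which is what we had to show.

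The only subtle step is the acyclicity check after pasting, but it reduces to the local observation that the identifications only occur at simplices which are critical in each local gradient, so gradient paths cannot cross between different copies $\DCC_e$.
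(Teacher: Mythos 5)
Your proof is correct and follows essentially the same route as the paper: apply the gradient of \cref{fig:cellAttach} on each gadget $\DCC_e$, observe that distinct gadgets of $\tildeKG$ share only the critical vertices $s$ and $t$ so the local gradients assemble into a global one, and conclude via \cref{Gradient Collapsing Theorem} that $\tildeKG$ collapses onto the subcomplex spanned by the $\eta_e$, which is the underlying undirected graph. Your explicit verification of acyclicity (gradient paths terminate at the shared critical vertices) is a detail the paper leaves implicit, but it is not a different argument.
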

\begin{pf} First note that $\omega_e$ is free in $\tildeKG$ for each $e$ in $\GCC$. Moreover, the only simplices that are possibly shared by gadgets $\DCC_i$ and $\DCC_j$ for $i \neq j$ are the vertices $s$ and $t$ of $\DCC_i$ and $\DCC_j$. Therefore, we can use the gradient vector field depicted in Figure~\ref{fig:cellAttach} to collapse each gadget $\DCC_e$ to the 1-simplex $\eta_e$ together with the vertices $s_e$ and $t_e$. 
Thus, $\tildeKG$ collapses to the subcomplex  $\QCC$ of $\tildeKG$ induced by these 1-simplices.
By construction of $\tildeKG$, this complex $\QCC$ is isomorphic to the undirected graph underlying $\GCC$.
\end{pf}

\begin{cor}\label{cor:homotopyeq} 
$\KG$ is homotopy equivalent to the undirected graph underlying $\GCC$. 
\end{cor}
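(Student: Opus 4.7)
The plan is to chain the two preceding lemmas. By Lemma~\ref{lem:topoprop}, $\KG$ is homotopy equivalent to $\tildeKG$, and by Lemma~\ref{lem:tildeGcollapses}, $\tildeKG$ collapses to the subcomplex $\QCC$ that is simplicially isomorphic to the undirected graph underlying $\GCC$. Since any simplicial collapse is a deformation retraction and in particular a homotopy equivalence (a standard fact recalled in \cref{sub:Discrete-Morse-Theory}, where simple-homotopy equivalence implies homotopy equivalence), transitivity of homotopy equivalence yields the conclusion.

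Concretely, I would write: ``By Lemma~\ref{lem:topoprop}, $\KG\simeq\tildeKG$. By Lemma~\ref{lem:tildeGcollapses}, $\tildeKG\searrow\QCC$, where $\QCC$ is isomorphic as a simplicial complex to the underlying undirected graph of $\GCC$; in particular $\tildeKG\simeq\QCC$. Composing these homotopy equivalences proves the claim.''

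There is no real obstacle here: the two lemmas are set up precisely so that this corollary drops out immediately, and the only background fact needed is that a collapse preserves homotopy type, which is already stated in the preliminaries. If one wanted to be completely explicit, one could remark that the underlying undirected graph of $\GCC$ is well-defined because $\GCC$ is assumed to be an oriented graph (so no anti-parallel edges collapse together), matching the convention established at the start of \cref{sec:hardnessmax}.
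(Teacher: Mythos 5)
Your proof is correct and follows exactly the route the paper intends: the corollary is stated without proof precisely because it is the immediate composition of Lemma~\ref{lem:topoprop} and Lemma~\ref{lem:tildeGcollapses}, using the standard fact that a collapse is a homotopy equivalence. Nothing is missing.
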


\subsection{Inapproximability results}

Given an oriented connected degree-3 graph $\GCC=(\VCC,\ECC)$ and the corresponding complex $\KG$, let $\OPTA$ denote the optimal value of the \threeOMAS problem on $\GCC$, and let $\OPTB$ denote the optimal value of the \MaxMM problem on $\KG$.

We now describe an L-reduction from \threeOMAS to \MaxMM.
The map $\KCC: \GCC \mapsto \KCC(\GCC)$ transforms instances of
\threeOMAS (directed graphs) to instances of \MaxMM (simplicial complexes). 
The map $\ACC$
that transforms solutions of \MaxMM (discrete gradients $\VSS$ on $\KG$) to solutions of \threeOMAS (acyclic subgraphs $\ACC(\GCC,\VSS)$) is defined as follows:
Let \[\ensuremath{\FCC=\left\{ f\in\ECC \mid \exists\sigma\in\DCC_{f} : \sigma\textnormal{ is a critical 2-simplex in }\VSS\right\} }.\]
By Lemma~\ref{lem:feedbackhelper}, $\FCC$ is a feedback arc set.
The corresponding solution $\ACC(\GCC,\VSS)$ for \threeOMAS is then simply the subgraph of $\GCC$ with edges $\ECC \setminus \FCC$. 
The value of the objective function $\ALGB$ is the number of regular simplices in $\VSS$; the value of the objective function $\ALGA$  is the number of edges of the acyclic subgraph, $\left|\ECC \setminus \FCC\right|$.

For a discrete gradient $\VSS$ on $\KG$, let $n$ denote the number of simplices in $\KG$, let $m$ denote the total number of critical simplices in $\VSS$, and let $m_d$ denote the number of critical simplices in dimension $d$. Also, let $\beta_d$ denote the Betti number of $\KG$ in dimension $d$, and let $\beta$ be the sum of all Betti numbers. 

\begin{lem} \label{lem:mbequalities} With the above notation,
\[\ALGB \leq n - 2\, m_2 - \beta \quad \text{and} \quad \ALGA \geq |\ECC| - m_2.\]
\end{lem}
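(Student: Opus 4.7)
Both bounds reduce to simple counting together with the Morse–theoretic relations between $m_0,m_1,m_2$ and the Betti numbers of $\KG$.

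For the first inequality, I would start by writing $\ALGB = n - m$, since every simplex is either matched (regular) or unmatched (critical). Thus the claim $\ALGB \leq n - 2 m_2 - \beta$ is equivalent to $m_0 + m_1 - m_2 \geq \beta$. Next I would observe that, by \cref{cor:homotopyeq}, $\KG$ is homotopy equivalent to the undirected graph underlying $\GCC$, which is a $1$-dimensional connected CW complex; hence $\beta_d = 0$ for $d\geq 2$, $\beta_0 = 1$, and $\beta = \beta_0 + \beta_1$. The crucial input is then the Euler characteristic equality for a discrete Morse function, which follows from Forman's Morse complex (stated earlier in the excerpt as a consequence of \cref{Gradient Collapsing Theorem}):
\[
m_0 - m_1 + m_2 \;=\; \beta_0 - \beta_1 + \beta_2 \;=\; \beta_0 - \beta_1.
\]
Substituting $m_1 - m_2 = m_0 - \beta_0 + \beta_1$ into the target gives
\[
m_0 + m_1 - m_2 \;=\; 2 m_0 + \beta_1 - \beta_0,
\]
so the inequality reduces to the weak Morse inequality $m_0 \geq \beta_0$, which in turn is trivial because $\KG$ is connected and therefore has at least one critical vertex (any isolated vertex in the Hasse diagram matching is critical, and since $\KG$ is connected we have $\beta_0 = 1$).

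For the second inequality, I would recall that $\ALGA = |\ECC| - |\FCC|$, where $\FCC = \{ f \in \ECC \mid \DCC_f \text{ contains a critical $2$-simplex of } \VSS \}$. Since the gadgets $\DCC_f$ are pairwise disjoint in dimension $2$ (each $2$-simplex of $\KG$ belongs to a unique $\DCC_f$ by the construction of $\KG$, which only identifies simplices of dimension $\leq 1$), the map sending a critical $2$-simplex $\sigma$ to the unique edge $f$ with $\sigma \in \DCC_f$ exhibits $\FCC$ as the image of the set of critical $2$-simplices. In particular $|\FCC| \leq m_2$, and hence $\ALGA \geq |\ECC| - m_2$.

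The main (minor) obstacle is simply ensuring that the Morse–theoretic facts invoked are genuinely available from what has been stated: the Euler characteristic identity and the weak Morse inequality $m_0 \geq \beta_0$ both follow from the chain–complex consequence of \cref{Gradient Collapsing Theorem}, and the homotopy type of $\KG$ is supplied by \cref{cor:homotopyeq}. Once these are in place, both bounds follow by direct manipulation and by the disjointness-in-top-dimension of the gadgets~$\DCC_f$.
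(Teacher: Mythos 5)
Your proposal is correct and follows essentially the same route as the paper: the first bound comes from the Morse equality $m_0 - m_1 + m_2 = \beta_0 - \beta_1 + \beta_2$ combined with $\beta_0 = 1$, $\beta_2 = 0$ (via \cref{cor:homotopyeq}) and the weak inequality $m_0 \geq \beta_0$, and the second bound from the fact that each critical $2$-simplex lies in a unique gadget $\DCC_f$, so $|\FCC| \leq m_2$. The algebra is rearranged slightly differently but the argument is the same.
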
 
\begin{pf}
 By the Morse inequalities~\cite[Theorem~3.7]{Fo98}, we have $m_0 \geq \beta_0$ and
\[ m_2 - m_1 + m_0 = \beta_2 - \beta_1 + \beta_0 . \]
From \cref{cor:homotopyeq}, we have $\beta_0=1$ and $\beta_2= 0$.
This gives us:
\begin{equation} \label{eq:morserel}
m_1 = \beta_1 + m_2 + (m_0 - \beta_0).
\end{equation}
Moreover, 
\begin{align*} 
m & = m_2 + m_1 + m_0 && \textnormal{(by definition)}\\
  & =  2\, m_2 + \beta_1 + 2\, m_0 -\beta_0 && \textnormal{(from Equation~\eqref{eq:morserel})}\\
  & = 2\, m_2 + \beta + 2\,(m_0 -\beta_0) && \textnormal{(since $\beta_2 = 0$)} \\
  & \geq 2\, m_2 + \beta  && \textnormal{(since $m_0 \geq \beta_0$).} 
\end{align*}
Hence, $\ALGB = n - m \leq n - 2\, m_2 - \beta$.

In the construction of the acyclic subgraph $\ACC(\GCC,\VSS)$, 
for every critical $2$-simplex in $\VSS$, we remove at most one edge in $\GCC$. Hence, we conclude that $\ALGA \geq |\ECC| - m_2$.
\end{pf}

\begin{lem}\label{lem:optlemma} Given a graph $\GCC$ and the corresponding complex $\KG$, 
\[\OPTB = n -  2\,\erkg - \beta = n - 2\,\opt\minFAS(\GCC) - \beta . \]
\end{lem}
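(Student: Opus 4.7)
The second equality is immediate from Proposition~\ref{prop:erkprop}, so the work lies in establishing $\OPTB = n - 2\,\erkg - \beta$. I would prove matching upper and lower bounds.

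For the upper bound, I would observe that any discrete gradient $\VSS$ on $\KG$ has at least $\erkg$ critical $2$-simplices by the very definition of $\erkg$ as the minimum number of critical $2$-simplices over all gradients. Plugging $m_2 \geq \erkg$ into the first inequality of Lemma~\ref{lem:mbequalities} yields $\ALGB \leq n - 2\,\erkg - \beta$, and taking the maximum over $\VSS$ gives $\OPTB \leq n - 2\,\erkg - \beta$.

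The matching lower bound requires exhibiting an explicit gradient $\VSS^{\star}$ that realizes this value. The plan is to retrace the construction from the first half of the proof of Proposition~\ref{prop:erkprop}: pick a minimum feedback arc set $\FCC \subseteq \ECC$ of size $\erkg$ and set $\HCC = (\VCC, \ECC \setminus \FCC)$. Declare every $\Gamma_f$ for $f \in \FCC$ to be critical; by Lemma~\ref{lem:remainerase}, each $\DCC_f \setminus \{\Gamma_f\}$ is erasable in $\KG$ through a gradient with no critical simplices in that subcomplex, and by Lemma~\ref{lem:acycliceqerase} (applied to the acyclic subgraph $\HCC$) the subcomplex $\KGH$ is also erasable. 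Combining these gradients (disjoint in the $2$-simplices they involve), the complex $\KG \setminus \{\Gamma_f \mid f \in \FCC\}$ collapses to a $1$-dimensional subcomplex $\RCC$, producing a gradient on $\KG$ with exactly $\erkg$ critical $2$-simplices and with all remaining critical simplices lying in $\RCC$.

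It remains to choose a gradient on $\RCC$ that is optimal, and to add it to the partial gradient above. Since $\KG$ is connected (by Corollary~\ref{cor:homotopyeq}) and removing $2$-simplices together with elementary collapses preserves connectedness, $\RCC$ is a connected $1$-complex. Applying Corollary~\ref{cor:unicollapse} on a spanning tree of $\RCC$ (or Lemma~\ref{lem:uniqueCriticalVertex} to any initial gradient on $\RCC$), I obtain a gradient on $\RCC$ with a unique critical $0$-simplex and critical $1$-simplices equal in number to $\beta_1(\RCC)$. A short Euler characteristic computation gives $\beta_1(\RCC) = \beta_1(\KG) + \erkg$: deleting $\erkg$ two-simplices from $\KG$ decreases the Euler characteristic by $\erkg$, and collapses preserve it, so $\chi(\RCC) = \chi(\KG) - \erkg = 1 - \beta_1(\KG) - \erkg$, whence (since $\RCC$ is a connected $1$-complex) $\beta_1(\RCC) = \beta_1(\KG) + \erkg$. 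The assembled gradient $\VSS^{\star}$ thus has $m_2 = \erkg$, $m_0 = 1$, and $m_1 = \beta_1(\KG) + \erkg$, totalling $2\,\erkg + \beta$ critical simplices, so $\ALGB = n - 2\,\erkg - \beta$, giving the desired lower bound.

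The main obstacle I anticipate is verifying that the gradients produced by Lemma~\ref{lem:remainerase} and Lemma~\ref{lem:acycliceqerase} can be combined without conflict (each $2$-simplex is paired by at most one) and leave no critical simplices outside $\RCC$; this should follow from the disjointness of the $2$-simplices of the different $\DCC_f$ and of $\KGH$, together with the fact that all new critical simplices introduced in the construction are confined to the $\Gamma_f$ with $f\in\FCC$ and to the residual $1$-complex $\RCC$. The second equality then follows from Proposition~\ref{prop:erkprop} by substituting $\erkg = \opt\minFAS(\GCC)$.
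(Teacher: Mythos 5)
Your proposal is correct and follows essentially the same route as the paper: the upper bound via Lemma~\ref{lem:mbequalities} together with $m_2\geq\erkg$ and $m_0\geq\beta_0$, and the lower bound by exhibiting a gradient attaining equality, built from a minimum feedback arc set as in Proposition~\ref{prop:erkprop}. The paper's own proof is terser (it simply asserts that an optimal gradient has $m_0=1$ and $m_2=\erkg$ and that equality in Lemma~\ref{lem:mbequalities} is then attained via the Morse relation), whereas you spell out the witness construction and the Euler-characteristic count $\beta_1(\RCC)=\beta_1+\erkg$ explicitly; both computations are equivalent and your added detail is sound.
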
   
\begin{pf} 
First note that for an optimal gradient vector field on $\KG$, we have $m_0 = 1$ and $m_2 = \erkg$.
The first equality now follows by observing that in the proof of Lemma~\ref{lem:mbequalities}, equality $\ALGB = n - 2\, m_2 - \beta$ is obtained for $\beta_0 = m_0$. The second equality follows immediately follows from Proposition~\ref{prop:erkprop}.
\end{pf}

\begin{lem}\label{lem:resize} $\OPTB \leq 78 \, \OPTA$.
\end{lem}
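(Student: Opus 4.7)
The plan is to combine the trivial bound $\OPTB \leq n$ (a Morse matching matches at most all simplices of $\KG$) with a linear bound $n = O(|\ECC|)$ coming from the construction, plus the standard fact that $\OPTA \geq |\ECC|/2$ for oriented graphs.

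To obtain the linear bound on $n$, I would note that the gadget $\DCC$ has a fixed number of simplices $N$ (readable from \cref{fig:cellAttach}). The complex $\KG$ is constructed by taking $|\ECC|$ disjoint copies $\DCC_e$ and identifying certain simplices according to rules (a)--(e), and such identifications only decrease the total simplex count. Hence $n \leq N|\ECC|$.

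For the bound on $\OPTA$: given any total order on the vertices of $\GCC$, both the set of forward-pointing edges and the set of backward-pointing edges form acyclic subgraphs, and at least one of them contains at least $|\ECC|/2$ edges. Hence $\OPTA \geq |\ECC|/2$, i.e., $|\ECC| \leq 2\,\OPTA$. Chaining the three inequalities gives $\OPTB \leq n \leq N|\ECC| \leq 2N\,\OPTA$, so the claim $\OPTB \leq 78\,\OPTA$ reduces to verifying $2N \leq 78$. If the naive count turns out too loose, the sharper identity
\[ \OPTB = 2\,\OPTA + n - 3|\ECC| + |\VCC| - 2 , \]
obtained by combining \cref{lem:optlemma} and \cref{cor:homotopyeq} with $\opt\minFAS(\GCC) = |\ECC| - \OPTA$ and the Betti numbers $\beta_0 = 1$, $\beta_1 = |\ECC|-|\VCC|+1$, $\beta_2 = 0$, can be used together with $|\VCC| \leq |\ECC|+1$ (since $\GCC$ is connected) to obtain a slightly better constant and provide slack for the simplex count.

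The main obstacle is purely the explicit bookkeeping: pinning down the exact simplex count of $\DCC$ (including the outer region whose triangulation is compressed in the picture), and the number of identifications produced by each of the rules (a)--(e), so that the final constant comes out to exactly $78$. No new ideas beyond the structural lemmas already established in the section are needed.
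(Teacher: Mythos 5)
Your proposal is correct and follows essentially the same route as the paper: the paper likewise chains $\OPTB \leq n \leq N\,|\ECC| \leq 2N\,\OPTA$ using the trivial $\tfrac{1}{2}$-approximation bound for acyclic subgraphs and the count $N = 7+19+13 = 39$ simplices in the modified dunce hat, so the constant is exactly $2N = 78$. The bookkeeping you flag as the remaining obstacle is precisely the one-line count the paper supplies; no fallback to the sharper identity is needed.
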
    
\begin{pf}
For the Max-Acyclic Subgraph problem, from the trivial $\frac{1}{2}$-factor approximation algorithm mentioned in~\cite[Ch.~1]{Vaz}, one knows that it is always possible to find an acyclic subgraph $\ACC_\HCC$ of a directed graph $\HCC$ that contains at least half the number of edges in $\HCC$. Clearly, this bound continues to hold when the class of graphs is restricted to degree-$3$ oriented graphs. This gives the following inequality:
\begin{equation} \label{eq:MASOPT}
\OPTA \geq \frac{\edgecardinality}{2} .
\end{equation} 

First note that the number of simplices in the modified dunce hat $\DCC$ is $7+19+13=39$. 
The complex $\KG$ described in \cref{sec:hardnessmax} is constructed from a disjoint union of $\edgecardinality$ copies of $\DCC$ with several simplices identified, giving us
\begin{equation} \label{eq:size}
n \leq 39\, \edgecardinality .
\end{equation}
From \cref{lem:optlemma,eq:size,eq:MASOPT} we obtain the bound
\[ \OPTB \leq n  \leq  39\, \edgecardinality   \leq 78 \, \OPTA .
\qedhere
\]
\end{pf}

\begin{lem}\label{lem:errorbound} 
\[\left( \OPTA - \ALGA \right) \leq \frac{1}{2}\left( \OPTB - \ALGB \right) . \]
\end{lem}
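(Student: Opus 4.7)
The plan is to chain together the bounds from \cref{lem:mbequalities} and \cref{lem:optlemma} together with the elementary complementarity between \MAS and \minFAS on the graph $\GCC$. Since every edge of $\GCC$ is either kept in a maximum acyclic subgraph or removed as part of a minimum feedback arc set, we have the identity $\OPTA + \opt\minFAS(\GCC) = |\ECC|$. This is the only input that has not yet been recorded.

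With that in hand, I would first upper-bound $\OPTA - \ALGA$. By \cref{lem:mbequalities}, $\ALGA \geq |\ECC| - m_2$, so
\[ \OPTA - \ALGA \leq \OPTA - |\ECC| + m_2 = m_2 - \opt\minFAS(\GCC) . \]
Next I would lower-bound $\OPTB - \ALGB$ using the other half of \cref{lem:mbequalities} together with \cref{lem:optlemma}:
\[ \OPTB - \ALGB \geq \bigl(n - 2\,\opt\minFAS(\GCC) - \beta\bigr) - \bigl(n - 2\, m_2 - \beta\bigr) = 2\bigl(m_2 - \opt\minFAS(\GCC)\bigr) . \]
Combining the two displayed inequalities yields $\OPTA - \ALGA \leq \tfrac{1}{2}(\OPTB - \ALGB)$, as required.

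There is no real obstacle here: the proof is just bookkeeping on top of the already-established Morse inequality computation (\cref{lem:mbequalities}) and the tight relationship $\erkg = \opt\minFAS(\GCC)$ (\cref{prop:erkprop}). The only subtlety is remembering that $m_2 - \opt\minFAS(\GCC) \geq 0$, which holds because any discrete gradient on $\KG$ yields, via the map $\ACC$, a feedback arc set of size at most $m_2$, so $\opt\minFAS(\GCC) \leq m_2$; this guarantees that the two bounds have the same sign and the combination is meaningful. Together with \cref{lem:resize}, this completes the L-reduction with parameters $\mu = 78$ and $\nu = \tfrac{1}{2}$.
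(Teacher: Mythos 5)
Your proof is correct and follows essentially the same route as the paper's: both use $\ALGA \geq |\ECC| - m_2$ from \cref{lem:mbequalities} together with $\OPTA = |\ECC| - \opt\minFAS(\GCC)$ to get $\OPTA - \ALGA \leq m_2 - \opt\minFAS(\GCC)$, and then \cref{lem:mbequalities} and \cref{lem:optlemma} to get $\OPTB - \ALGB \geq 2\,(m_2 - \opt\minFAS(\GCC))$, which chain together immediately. The sign observation $m_2 \geq \opt\minFAS(\GCC)$ is true but not actually needed, since the chain $\OPTA - \ALGA \leq m_2 - \opt\minFAS(\GCC) \leq \tfrac{1}{2}(\OPTB - \ALGB)$ holds regardless of the sign of the middle quantity.
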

\begin{pf} By definition, $\OPTA = \ECC - \opt\minFAS(\GCC)$. By Lemma~\ref{lem:mbequalities}, \[\ALGA \geq |\ECC| - m_2.\] Hence,
\begin{equation}\label{eq:LHS}
\left( \OPTA - \ALGA \right) \leq m_2 - \opt\minFAS(\GCC) .
\end{equation}
Using Lemma~\ref{lem:mbequalities} and Lemma~\ref{lem:optlemma}, we obtain
\begin{equation}\label{eq:RHS}
\left( \OPTB - \ALGB \right) \geq 2\,(m_2 - \opt\minFAS(\GCC)) .
\end{equation}
Substituting Equation~\ref{eq:LHS} in Equation~\ref{eq:RHS}, we obtain the lemma.
\end{pf}

\begin{thm} It is NP-hard to approximate \MaxMM within a factor of $\left(1 - \frac{1}{4914}\right)+\epsilon$ and UGC-hard to approximate it within a factor of $\left(1 - \frac{1}{702}\right)+\epsilon$, for any $\epsilon>0$.
\end{thm}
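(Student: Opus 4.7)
The plan is to simply assemble the L-reduction parameters already proved in the preceding lemmas and feed them into \cref{thm:newfactor}, combined with the hardness results for \threeOMAS from \cref{cor:hammer}.

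First I would note that \cref{lem:resize} and \cref{lem:errorbound} together show that the pair of maps $(\KCC, \ACC)$ constitutes an L-reduction from \threeOMAS to \MaxMM with parameters $\mu = 78$ and $\nu = \tfrac{1}{2}$, so that $\mu\nu = 39$. The first two items of the L-reduction definition (the map $f$ is polynomial-time computable, and $\opt\MaxMM(\KCC(\GCC)) \leq 78\,\opt\threeOMAS(\GCC)$) come from \cref{lem:resize}, while items three and four (polynomial-time computability of $g$ and the error inequality $\opt\threeOMAS(\GCC) - \ALGA \leq \tfrac{1}{2}(\opt\MaxMM(\KG) - \ALGB)$) come from the construction of $\ACC(\GCC,\VSS)$ via \cref{lem:feedbackhelper} together with \cref{lem:errorbound}.

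Next I would apply \cref{thm:newfactor}: the existence of a $(1-\delta)$-approximation for \MaxMM would yield a $(1-\mu\nu\delta) = (1-39\delta)$-approximation for \threeOMAS. I then contrapose against \cref{cor:hammer}.

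For the NP-hardness statement, \cref{cor:hammer} says that approximating \threeOMAS within $(1-\tfrac{1}{126})+\epsilon'$ is NP-hard for every $\epsilon'>0$. So if \MaxMM admitted a $(1-\delta)$-approximation with $\delta = \tfrac{1}{4914}-\tfrac{\epsilon}{1}$ (more precisely, with $1-\delta \geq (1-\tfrac{1}{4914})+\epsilon$), then since $39\cdot\tfrac{1}{4914} = \tfrac{1}{126}$ we would get a $(1-\tfrac{1}{126})+39\epsilon$-approximation for \threeOMAS, contradicting NP-hardness (by choosing $\epsilon'=39\epsilon$). For the UGC-hardness statement the argument is identical with $\tfrac{1}{18}$ in place of $\tfrac{1}{126}$, using $39\cdot\tfrac{1}{702}=\tfrac{1}{18}$.

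I do not expect any step to be a real obstacle: all structural work has already been done in \cref{lem:resize,lem:errorbound,lem:feedbackhelper}, and what remains is the arithmetic verification $126\cdot 39 = 4914$ and $18\cdot 39 = 702$ together with a careful handling of the $\epsilon$ slack when transferring a $(1-\delta)$-approximation across the L-reduction. The only minor care needed is to phrase the contrapositive so that an arbitrary $\epsilon>0$ in the \MaxMM bound produces a suitable $\epsilon'>0$ in the \threeOMAS bound, which is immediate from $\epsilon' = 39\epsilon$.
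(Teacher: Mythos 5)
Your proposal is correct and takes essentially the same route as the paper's own proof: it assembles the L-reduction parameters $\mu=78$ and $\nu=\tfrac12$ from \cref{lem:resize,lem:errorbound}, applies \cref{thm:newfactor}, and contraposes against the \threeOMAS hardness bounds of \cref{cor:hammer}, with the same arithmetic $39\cdot 126 = 4914$ and $39\cdot 18 = 702$ and the same handling of the $\epsilon$ slack.
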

\begin{pf} From Lemma~\ref{lem:resize} and Lemma~\ref{lem:errorbound} we conclude that the reduction from \threeOMAS to \MaxMM is an L-reduction with parameters $\mu=78$ and $\nu=\frac{1}{2}$. By Theorem~\ref{thm:newfactor}, if there exists a $\big( 1 - \frac{1}{\mu\nu} \delta + \epsilon \big)$-approximation algorithm for \MaxMM, then there exists a  $\left(1 - \delta + \mu\nu\epsilon\right)$-algorithm for \threeOMAS. Using Corollary~\ref{cor:hammer}, we choose $\delta = \frac{1}{126}$ to deduce that it is NP-hard to approximate \MaxMM within a factor of $
\left(1 - \frac{1}{4914}\right)+\epsilon$,
and choose $\delta = \frac{1}{18}$ to deduce that it is UGC-hard to approximate \MaxMM within a factor of $
\left(1 - \frac{1}{702}\right)+\epsilon$.
\end{pf}

\section{Conclusion \& Discussion} 

In this paper, we provide the first hardness of approximation results for
the maximization and the minimization variants of the Morse matching
problems. 

While we established a hardness result for Min-Morse Matching on simplicial complexes
of dimension $d\geq3$, the question
of hardness of approximation for Min-Morse matching for 2-dimensional
simplicial complexes remains open. We will address this question in future work.

For the Max-Morse Matching problem on $d$-dimensional simplicial complexes, although our work clears a major hurdle of going beyond NP-hardness, a gap remains between the best approximability and inapproximability bounds. The best known approximation algorithm for Max-morse matching on simplicial complexes yields an approximation ratio of  $\frac{d+1}{d^{2}+d+1}$~\cite{RBN17}. We believe that our result and techniques will pave way for further work in improving the gap, and in placing Max-Morse Matching in the right kind of approximation-algorithms related complexity class. In particular, devising an approximation algorithm for Max-Morse Matching with an approximation factor independent of the dimension of the complex, or establishing a hardness of approximation result for Max-Morse Matching that is dependent on the dimension is a challenging open problem.

 We close the discussion with some additional open problems. 
 Note that the complex $\KG$ employed in the hardness result for Max-Morse Matching described in \cref{sec:hardnessmax} is not a manifold. Hence, the question of hardness of approximation for Max-Morse Matching on simplicial manifolds is open. The best known approximation algorithm for Max-Morse Matching on $d$-dimensional simplicial manifolds has approximation factor $\frac{2}{d}$~\cite{RBN17}. 
 
 Finally, we also leave it as an open question to investigate sharper inapproximability bounds for Max-Morse Matching on regular cell complexes.

\bibliography{hard-morse.bib}

\end{document}